\newcommand{\Gr}{\mathrm{Gr}}
\newcommand{\Fl}{F}
\newcommand{\sym}{S}
\newcommand{\Bl}{Bl}
\newcommand{\PP}{\mathbb{P}}
\newcommand{\OO}{\mathcal O}
\newcommand*{\shom}{\mathcal{H}\kern -.5pt om} 
\newcommand{\rk}{\mathrm{rk}}
\newcommand{\Hom}{\mathrm{Hom}}
\newcommand{\OI}{\mathcal I}
\newcommand{\ra}{\rightarrow}
\newcommand{\twopartdef}[4]
{
	\left\{
		\begin{array}{ll}
			#1 & \mbox{if } #2 \\
			#3 & \mbox{if } #4
		\end{array}
	\right.
}
\newtheorem{theorem}{Theorem}[section]
\newtheorem{lemma}[theorem]{Lemma}
\newtheorem{proposition}[theorem]{Proposition}
\theoremstyle{definition}
\newtheorem{remark}[theorem]{Remark}
\newtheorem{example}[theorem]{Example}
\newtheorem*{theorem*}{Theorem}
\newtheorem{definition}[theorem]{Definition}
\title{Very free rational curves in Fano varieties}
\author[I. Coskun]{Izzet Coskun}
\author[G. Smith]{Geoffrey Smith}
\address{Department of Mathematics, Stat. and CS \\University of Illinois at Chicago, Chicago, IL 60607}
\email{icoskun@uic.edu, geoff@uic.edu}
\begin{document}
\subjclass[2010]{Primary: 14H60, 14G17. Secondary: 14J45, 14N25}
\keywords{Rational curves, normal bundles, separable rational connectedness}
\thanks{During the preparation of this article the first author was partially supported by the NSF FRG grant DMS 1664296.}
\begin{abstract}
Let $X$ be a projective variety and let $C$ be a rational normal curve on $X$. We compute the normal bundle of $C$ in a general complete intersection of hypersurfaces of sufficiently large degree in $X$. As a result, we establish the separable rational connectedness of a large class of varieties, including general Fano complete intersections of hypersurfaces of degree at least three in flag varieties, in arbitrary characteristic. In addition, we give a new way of computing the normal bundle of certain rational curves in products of varieties in terms of their restricted tangent bundles and normal bundles on each factor. 
\end{abstract}
\maketitle

\section{Introduction and statement of results}\label{introduction}
Spaces of rational curves on a proper variety $X$ play a fundamental role in the birational geometry and arithmetic of $X$. Given a rational curve $C$ on $X$, the normal bundle $N_{C|X}$ controls the deformations of $C$ in $X$ and carries essential information about the local structure of the space of rational curves. Consequently, the normal bundles of rational curves have been studied extensively when $X$ is $\PP^n$ (\cite{AlzatiRe,Conduche, CoskunRiedl,EisenbudVandeven, EisenbudVandeven2,GhioneSacchiero, Ran, Sacchiero, Sacchiero2}) and more generally (see for example \cite{Bridges, CR19, Kol96, LT19, Shen2}). 

In this paper, we study the normal bundle of rational curves in certain complete intersections in homogeneous varieties with the  goal of showing  the separable rational connectedness of the general such complete intersection. We work over an algebraically closed field $k$ of arbitrary characteristic. 

A variety $X$ is {\em separably rationally connected} ({\em SRC})  if there exists a variety $Y$ and a morphism $e:Y\times \PP^1\ra X$ such that the induced morphism on products,
\[
e^{(2)}: Y\times \PP^1\times \PP^1 \ra X\times X,
\]
is dominant and smooth. We refer the reader to \cite{Kol96} for a  discussion of the properties of SRC varieties. 
By the Birkhoff-Grothendieck theorem,  every vector bundle on $\PP^1$ is a direct sum of line bundles. Hence, the normal bundle of a smooth rational curve $C$ on $X$ can be written as  $N_{C\vert X}\cong \bigoplus_{1\leq i\leq \dim(X)-1}\OO(a_i)$. The curve $C$ is called {\em very free} if $N_{C\vert X}$ is ample or equivalently every $a_i$ is positive. 
The bundle $N_{C\vert X}$ is called \emph{balanced} if $\lvert a_i-a_j\rvert\leq 1$ for all $i,j$.
If $X$ is a smooth variety over an algebraically closed field, then $X$ is SRC if $X$ contains a  very free rational curve \cite[Theorem IV.3.7]{Kol96}.

In characteristic 0, rationally connected varieties, and in particular smooth Fano varieties,  are SRC \cite[Theorem V.2.13]{Kol96}.  Koll\'{a}r points out that SRC is the suitable generalization of rational connectedness to arbitrary characteristic and poses the question whether every smooth Fano variety in positive characteristic is SRC? Koll\'{a}r's question has been answered affirmatively for general Fano complete intersections in $\PP^n$  \cite{CZ14,Tia15}. The paper \cite{CR19} gives sharp bounds on the degree of very free rational curves on general Fano complete intersections in $\PP^n$. In a more negative direction, certain special Fano hypersurfaces are known not to have  very free curves of low degree \cite{Bridges, She12}.

In this paper, we give further examples of SRC varieties in positive characteristic. In the case of Grassmannians, our result reads as follows.

\begin{theorem}\label{grTheorem}
Let $d_1 , \dots ,  d_c \geq 3$ be integers.  If $\sum_{i=1}^c d_i < n$, then a general complete intersection 
  $Y = \bigcap_{i=1}^c Y_i$  of  hypersurfaces $Y_i$ of degree $d_i$ in the Grassmannian $G(k,n)$ is SRC.
\end{theorem}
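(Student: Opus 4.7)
The plan is to reduce the SRC conclusion to the existence of a very free rational curve on the general complete intersection $Y$, and then to produce such a curve as a rational normal curve $C \subset G(k,n)$ whose normal bundle in $Y$ can be read off from the paper's main normal-bundle computation.

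First I would fix a concrete choice of rational normal curve $C$ in $X = G(k,n)$. A natural candidate is the image of a map $\PP^1 \to G(k,n)$ whose Pl\"ucker embedding realizes it as a rational normal curve of degree $n-k$ (or more generally the "standard" rational normal curves studied in the literature on $G(k,n)$). For such $C$, the restricted tangent bundle $T_{G(k,n)}|_C$ and the normal bundle $N_{C|G(k,n)}$ have an explicit, balanced splitting into line bundles $\OO(a_i)$ that can be computed from the tautological sequence on $G(k,n)$. I would record these splitting types, and in particular note the maximum and minimum twists appearing.

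Next, I would invoke the paper's main theorem (the normal bundle computation advertised in the abstract) applied to $X = G(k,n)$ and the complete intersection $Y = \bigcap_{i=1}^c Y_i$. This theorem, under the "sufficiently large degree" hypothesis which the bounds $d_i \geq 3$ are designed to meet, expresses $N_{C|Y}$ as a modification of $N_{C|X}$: informally, one replaces certain $c$ summands of $N_{C|X}$ by twists that record the intersection with the $Y_i$, producing a direct sum in which the total degree drops by $\sum d_i \cdot \deg C$. The key algebraic input is then a verification that, given the known splitting of $N_{C|X}$ and the form dictated by the main theorem, each line-bundle summand of $N_{C|Y}$ is still $\OO(a)$ with $a \geq 1$. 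This is where the hypothesis $\sum d_i < n$ enters: it guarantees that the total degree of $N_{C|Y}$ exceeds $\dim Y - 1$ by enough to keep every summand strictly positive, given the balanced starting splitting on $G(k,n)$. The hypothesis $d_i \geq 3$ is needed both to apply the main theorem and to ensure that no individual degree contribution is large enough to push a summand down to degree $\leq 0$.

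Once the normal bundle $N_{C|Y}$ is shown to be ample, $C$ is a very free rational curve on the smooth variety $Y$, and \cite[Theorem IV.3.7]{Kol96} immediately gives that $Y$ is SRC. The step I expect to be the main obstacle is the last one in the previous paragraph: bookkeeping the splitting type produced by the general normal-bundle theorem and checking, using only $d_i \geq 3$ and $\sum d_i < n$, that every summand of $N_{C|Y}$ has strictly positive degree. This will require being careful about which summands of $N_{C|X}$ have the lowest twist, since those are the ones most at risk of being pushed to non-positive degree after the modification by the $d_i$; the balanced nature of $N_{C|G(k,n)}$ for a well-chosen rational normal curve is what makes this delicate check feasible.
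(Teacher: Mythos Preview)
Your overall strategy is exactly the paper's: embed $G(k,n)$ by Pl\"ucker, choose a very free rational normal curve $C$, use the main normal-bundle theorem to control $N_{C\vert Y}$ for a suitable complete intersection $Y\supset C$, and then conclude SRC. Two points, however, are genuine gaps rather than bookkeeping.

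First, the degree of your curve is too small. A curve of Pl\"ucker degree $n-k$ will not in general satisfy the numerical inequality that forces $N_{C\vert Y}$ to be ample. What is needed is
\[
e\Bigl(n-\sum_i d_i\Bigr)\ \geq\ k(n-k)-c+1,
\]
and with $\sum_i d_i$ allowed to be as large as $n-1$ this forces $e\geq k(n-k)-c+1$, not $e=n-k$. The paper takes $e=k(n-k)$: it writes down an explicit map $\PP^1\to G(k,n)$ sending $(s,t)$ to the span of the $k$ vectors $\sum_{i=0}^{n-k}s^{n-k-i}t^ie_{i+j}$, checks directly that this is a rational normal curve of degree $k(n-k)$ under Pl\"ucker, and notes that the universal subbundle restricts to $\OO(k-n)^{\oplus k}$, so $T_{G(k,n)}\vert_C$ is ample. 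With this $e$, the inequality above is automatic once $\sum d_i<n$.

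Second, your description of what the normal-bundle theorem produces is not quite right and hides the step where the positivity is actually proved. The theorem does not ``replace $c$ summands by twists''; it realizes $N_{C\vert Y}$ as the kernel of an \emph{arbitrary prescribed} surjection $q:N_{C\vert X}\to\bigoplus_i\OO(d_iH)\vert_C$. Ampleness of $N_{C\vert Y}$ then follows not from tracking individual summands but from a separate lemma: if $E,F$ are globally generated on $\PP^1$ with $\shom(E,F)$ globally generated, $\rk E>\rk F$, and $\deg E\geq\deg F$, then the kernel of a \emph{general} surjection $E\to F$ is globally generated. One applies this to $E=N_{C\vert X}(-1)$ and $F=\bigoplus_i\OO(d_iH)\vert_C(-1)$; the degree inequality is exactly the displayed one above, and the hypothesis $d_i\geq 3$ (together with the fact that summands of $N_{C\vert X}$ have degree at most $e+2$) ensures $\shom(E,F)$ is globally generated. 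Finally, one must pass from ``some $Y\supset C$, smooth along $C$'' to ``the general $Y$'': the paper does this by a Bertini irreducibility argument plus an openness-of-very-freeness lemma in flat families, a step your outline omits.
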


\begin{remark}
More generally, let $C$ be a general rational normal curve of degree $e$ in $G(k,n)$ in its Pl\"{u}cker embedding. Let  $Y_i \subset G(k,n)$ be general hypersurfaces of degree $d_i \geq 3$ containing $C$ and let $Y = \bigcap _{i=1}^c Y_i$. Then $N_{C|Y}$ is balanced. This remains true regardless of whether $Y$ is Fano.

Observe that $Y$ is Fano precisely when $n > \sum_{i=1}^c d_i$. Hence, if $$e \left(n - \sum_{i=1}^c d_i\right) > k(n-k)-c,$$ then $C$ is a very free rational curve on $Y$. This gives the optimal degree bound for a very free rational curve on such a Fano complete intersection.
  \end{remark}

Similar statements hold for flag varieties and products.
\begin{theorem}\label{someMoreSRCVarieties}
\begin{enumerate}

\item Let $X$ be a flag variety. Let $H$ be the minimal ample divisor on $X$, and let $D_1,\ldots,D_c$ be divisor classes such that for each $i$, $D_i-3H$ is effective. Let $Y$ be a complete intersection of general hypersurfaces $Y_1,\ldots,Y_c$ of classes $D_1,\ldots,D_c$. If $-K_X-D_1-\cdots-D_c$ is ample, then $Y$ is SRC.

\item Let $X$ be a product of projective spaces. For each $1\leq i \leq c$, let $D_i$ be a divisor class of degree at least 3 on each factor space. Let $Y$ be the general complete intersection of hypersurfaces of type $D_1, \ldots, D_c$. If $-K_X-D_1-\cdots -D_c$ is ample, then $Y$ is SRC.
\end{enumerate}
\end{theorem}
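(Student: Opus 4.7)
The plan is to follow the same strategy as in Theorem~\ref{grTheorem}: produce a very free rational curve on $Y$ by starting with a suitable rational normal curve $C\subset X$ of sufficiently large degree, showing that $N_{C|Y}$ is balanced, and then using the ampleness hypothesis on $-K_X-D_1-\cdots-D_c$ to promote balancedness to ampleness. By Koll\'ar's theorem, the existence of a very free rational curve on the smooth variety $Y$ then gives the SRC property.

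For part (1), let $H$ be the minimal ample divisor on the flag variety $X$ and choose a rational normal curve $C\subset X$ of large $H$-degree $e$ with sufficiently generic normal bundle; such curves exist because $X$ is homogeneous and is swept out by lines through any general point, which can be smoothed together into higher-degree rational normal curves. Because each $D_i-3H$ is effective, a general hypersurface $Y_i$ of class $D_i$ containing $C$ meets the degree threshold appearing in the paper's main normal bundle theorem (announced in the abstract), and so that theorem produces a general complete intersection $Y=\bigcap Y_i$ with $N_{C|Y}$ balanced. By adjunction, $\deg N_{C|Y}=(-K_X-D_1-\cdots-D_c)\cdot C-2$, and ampleness of $-K_X-D_1-\cdots-D_c$ forces this degree to grow linearly in $e$. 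For $e\gg 0$, every summand of a balanced bundle of this degree and rank $\dim Y-1$ has positive degree, so $N_{C|Y}$ is ample and $C$ is very free on $Y$.

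For part (2) the outline is the same, but the initial curve must be chosen compatibly with the product decomposition $X=\PP^{n_1}\times\cdots\times\PP^{n_r}$. I would take $C$ to be the image of a morphism $\PP^1\to X$ whose composition with each projection is a rational normal curve of degree $e_j\gg 0$; the new formula for normal bundles in products, expressing $N_{C|X}$ in terms of the restricted tangent bundles and normal bundles on each factor, then furnishes $N_{C|X}$ in sufficiently generic form to be usable as input. The hypothesis that each $D_i$ has degree at least $3$ on every factor again matches the threshold for the main theorem, $N_{C|Y}$ is balanced, and ampleness of $-K_X-D_1-\cdots-D_c$ promotes this to ampleness of $N_{C|Y}$ exactly as in part (1).

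The main obstacle in both parts is the initial step: verifying that the rational normal curve $C\subset X$ one starts with satisfies the hypotheses of the main normal bundle theorem, in particular that $N_{C|X}$ itself is balanced, globally generated, or otherwise sufficiently generic for the degeneration argument to give exact control of $N_{C|Y}$. For flag varieties this should reduce to a statement about deformations of rational normal curves in homogeneous spaces, while for products of projective spaces the new product formula does most of the computational work but requires careful bookkeeping of the multi-degrees of the $D_i$ against the multi-degree of $C$.
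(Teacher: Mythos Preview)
Your high-level strategy---produce a rational normal curve $C\subset X$ meeting the hypotheses of Theorem~\ref{mainApplied} and then read off ampleness of $N_{C|Y}$ from the Fano condition---is exactly what the paper does. But the details you propose for both parts diverge from the paper's, and in part~(2) your route has a real problem.

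For part~(2) the paper does \emph{not} use the product normal-bundle formula (Theorem~\ref{productsMain}) at all. It simply takes $C$ to be the diagonal curve that is a rational normal curve of degree exactly $a_j$ in each factor $\PP^{a_j}$. This $C$ is automatically very free in $X$ and linearly normal under the Segre embedding by $H=H_1+\cdots+H_r$, so Theorem~\ref{mainApplied} applies directly; the inequality $-K_Y\cdot C\geq\dim X-c+1$ follows because $H\cdot C=\sum a_j=\dim X$ and $H$ is the minimal ample class. Your plan of taking $e_j\gg 0$ is actually problematic: a map $\PP^1\to\PP^{a_j}$ of degree $e_j>a_j$ is not an embedding, so its image is not a rational normal curve and the hypotheses of Theorem~\ref{mainApplied} fail. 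There is no need to let degrees grow, and no need for the product formula.

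For part~(1) you correctly flag the construction of $C$ as the obstacle, but the paper does not resolve it by smoothing chains of lines. Instead it writes down an explicit map $\PP^1\to\Gr(k_r,n)$ (and extends it inductively down the flag) whose Pl\"ucker image is verified by hand to be a rational normal curve, and whose restricted tangent bundle is ample because the tautological subbundle restricts anti-amply. The required inequality $-K_Y\cdot C\geq\dim X-c+1$ then holds because $H\cdot C=\sum_i k_i(n-k_i)=\dim X$; again no limiting argument in $e$ is used. Your smoothing sketch would have to establish linear normality of the resulting curve under the Pl\"ucker map, which is not automatic and is the genuine content here.

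A minor point: the paper never argues that $N_{C|Y}$ is balanced. Theorem~\ref{mainApplied} only asserts ampleness, obtained via Lemma~\ref{globallyGeneratedKernel} applied to the general surjection $N_{C|X}\to\bigoplus\OO(D_i)|_C$; balancedness is a remark, not part of the logical chain.
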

\begin{remark}
A similar result holds for any homogeneous space---indeed, any Schubert variety---on which one can find a very free rational normal curve in the smooth locus.  See Theorem \ref{schubertVarieties} and Proposition \ref{wps} for further examples.
\end{remark}

To prove Theorems \ref{grTheorem} and \ref{someMoreSRCVarieties}, we construct very free rational curves on these complete intersections. We consider a general rational normal curve $C$ in $X$ and show that the normal bundle of $C$ in a general complete intersection $Y$ containing $C$ is balanced. In particular, if the complete intersection is Fano and the degree of $C$ is sufficiently large, then $C$ is a very free rational curve on $Y$. Our main technical result is the following.

\begin{theorem}\label{mainApplied}
Let $X\subset \PP^n$ be a linearly normal Cohen-Macaulay projective variety of dimension $m$ whose ideal is generated in degree $k$. Let $C$ be a rational normal curve of degree $e$ in $\PP^n$ contained in the smooth locus of $X$. Assume $C$ is very free in $X$. Let $H$ denote the hyperplane class in $\PP^n$. Fix some integer $c\leq m-2$. For each $1\leq i\leq c$, let $D_i=d_iH +E_i$ be a Cartier divisor class on $X$ with $d_i\geq \max(k,3)$ and $E_i$ an effective Cartier divisor class such that
\[
H^0(X,\OO(E_i))\ra H^0(C, \OO(E_i)\vert_C)
\]
is a surjection and each divisor class $(d_i-3)H+E_i$ is base point free. Let $Y$ be the zero locus of a general section of $\bigoplus_{i=1}^c \OO(D_i)$.  If
\[
C\cdot \left(-K_X-\sum_{1\leq i \leq c}D_i\right)\geq m-c+1,
\]
 then $Y$ has a very free rational curve and is SRC.
\end{theorem}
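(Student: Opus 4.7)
The plan is to produce a very free rational curve on $Y$ by taking the given rational normal curve $C$ itself and showing that its normal bundle $N_{C|Y}$ is balanced for a generic choice of $Y$ containing $C$. The SRC conclusion then follows from Koll\'{a}r's criterion applied to the smooth locus of $Y$.

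First I would check that general sections $s_i \in H^0(X, \OO(D_i))$ vanishing on $C$ exist and that, for a generic such tuple $(s_1,\ldots,s_c)$, the zero locus $Y$ contains $C$ and is smooth along $C$. The surjectivity hypothesis on $H^0(X,\OO(E_i)) \to H^0(C, \OO(E_i)|_C)$, together with the Cohen--Macaulay hypothesis on $X$ and the bound $d_i \geq k$, should provide enough sections of $\OO(D_i)$ vanishing on $C$; since $C$ lies in the smooth locus of $X$, a Bertini argument yields smoothness of $Y$ along $C$. Then I would invoke the normal bundle sequence
\[
0 \to N_{C|Y} \to N_{C|X} \to \bigoplus_{i=1}^c \OO(D_i)|_C \to 0.
\]
By adjunction $\deg N_{C|Y} = C \cdot (-K_X - \sum D_i) - 2$, and the rank is $m-c-1$. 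So if $N_{C|Y}$ is balanced, the hypothesis $C \cdot (-K_X - \sum D_i) \geq m-c+1$ forces every summand to have positive degree. Balancedness therefore implies that $C$ is very free on $Y$.

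The main obstacle is establishing balancedness of $N_{C|Y}$ for generic $Y$. The induced surjection $N_{C|X} \to \OO(D_i)|_C$ is the first-order restriction of $s_i$ along $C$, producing a map
\[
H^0(X, \OO(D_i) \otimes \OI_C) \to \Hom(N_{C|X}, \OO(D_i)|_C).
\]
The plan is to show that this restriction map is surjective, using the base point freeness of $(d_i-3)H + E_i$ and the surjectivity assumption on $E_i$-sections; the shift $d_i - 3$ is where the hypothesis $d_i \geq 3$ enters, giving the cohomological slack needed to freely prescribe the normal-direction Taylor terms of $s_i$ after fixing its restriction to $C$. Granted surjectivity, the induced map $N_{C|X} \to \bigoplus_i \OO(D_i)|_C$ is a generic surjection from the ample bundle $N_{C|X}$ (ample because $C$ is very free in $X$) onto a sum of line bundles on $\PP^1$, and a standard balancing result in the style of Sacchiero and Ghione--Sacchiero then yields a kernel whose splitting type is as balanced as possible. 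Combined with the degree bound this produces a very free $C \subset Y$, and SRC follows from \cite[Theorem IV.3.7]{Kol96}.
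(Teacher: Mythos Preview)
Your overall strategy matches the paper's: prove that the divisors $D_i$ are what the paper calls $N^*_{C|X}$-surjective (your surjectivity of the restriction map), so that any surjection $N_{C|X}\to\bigoplus_i\OO(D_i)|_C$ is realized by some complete intersection containing $C$, then argue that the kernel of a general such surjection is ample. The first step is exactly Theorem~\ref{main} and the lemmas feeding into it.

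There is, however, a genuine gap at the balancedness step. The kernel of a general surjection of vector bundles on $\PP^1$ is \emph{not} balanced in general, even when the source is ample and $\shom(E,F)$ is globally generated: for instance the general kernel of $\OO(1)\oplus\OO(3)^{\oplus 2}\to\OO(3)$ is $\OO(1)\oplus\OO(3)$, not $\OO(2)^{\oplus 2}$ (compare the paper's remark after Proposition~\ref{specifiedNormalBundle}). So invoking a ``standard balancing result in the style of Sacchiero and Ghione--Sacchiero'' does not work here; no such result exists in this generality. The paper circumvents this by proving a weaker but sufficient statement (Lemma~\ref{globallyGeneratedKernel}): if $E$, $F$, and $\shom(E,F)$ are all globally generated with $\rk E>\rk F$ and $\deg E\ge\deg F$, then the general kernel is globally generated. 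One applies this to $E=N_{C|X}(-1)$ and $F=\bigoplus_i\OO(D_i)|_C(-1)$, using very freeness of $C$ for global generation of $E$, the rational-normal-curve bound on the summands of $N_{C|X}$ from \cite[Corollary~2.6]{CR19} for global generation of $\shom(E,F)$, and your numerical hypothesis for the degree inequality. This yields $N_{C|Y}(-1)$ globally generated, hence $N_{C|Y}$ ample, without ever needing balancedness.

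Two further points your outline omits. First, the base-point-freeness of $(d_i-3)H+E_i$ is not used where you place it; the paper uses it to show that $D_i-E$ is very ample on $\Bl_C(X)$, so that Bertini irreducibility \cite{Ben11} forces the general $Y$ containing $C$ to be irreducible of the expected dimension. Second, the theorem is about the \emph{general} complete intersection, not one containing $C$; you still need an openness argument (the paper's Lemma~\ref{generalization}, a variant of \cite[Theorem IV.3.11]{Kol96}) to propagate the very free curve from your special $Y$ to the general member, using flatness of the universal complete intersection.
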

\noindent Moreover, the normal bundle of a rational curve $C$ in $X$ determines its normal bundle  in a general complete intersection $Y$ containing it. We make this precise in Theorem \ref{main}.

In Section \ref{products}, we discuss the normal bundle of rational curves in products.
Given a map $f:\PP^1\ra X$, let $N_f$ be the vector bundle determined by the exact sequence
\[
0\ra T\PP^1\ra f^*TX\ra N_f\ra 0.
\]
\begin{theorem}\label{productsMain}
For $1\leq i\leq r$, let  $f_i:\PP^1\ra X_i$ be an immersion into a variety $X_i$, which is smooth along the image of $f_i$. Set $X=X_1\times \cdots \times X_r$ and let $f: \PP^1 \to X$ be the map induced by the maps $f_i$. Suppose the characteristic $p$ of the base field $k$ is zero or there exists $i$ such that $$H^0(\PP^1,f_i^*(T^*X_i)(p+2))\ra H^0(\PP^1,T^*\PP^1(p+2))$$ is surjective. 
Then there exists a deformation $g$ of  $f$ such that $$g^*(TX) \cong f^*(TX) \quad \mbox{and} $$
\[
h^0(\PP^1, N^*_g(d))=\max(h^0(\PP^1,f^*(T^*X(d)))-d+1, \ \sum_{i=1}^r h^0(\PP^1,N_{f_i}^*(d))).
\]
\end{theorem}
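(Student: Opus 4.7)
The strategy is to relate $N_g^*$ to the direct sum $\bigoplus_i N_{g_i}^*$ via two short exact sequences coming from the product decomposition, extract two a~priori lower bounds on $h^0(N_g^*(d))$, and then exhibit a deformation $g$ realizing whichever bound is larger. Because $TX = \bigoplus_i \pi_i^*TX_i$, the derivative $T\PP^1 \to g^*TX$ factors through the diagonal $T\PP^1 \hookrightarrow (T\PP^1)^{\oplus r} \to \bigoplus_i g_i^*TX_i$; the snake lemma applied to the commutative diagram comparing the normal bundle sequence of $g$ to the direct sum of the normal bundle sequences of the $g_i$'s produces the exact sequence
\[
0 \to (T\PP^1)^{\oplus(r-1)} \to N_g \to \bigoplus_i N_{g_i} \to 0.
\]
Dualizing, twisting by $\OO(d)$, and taking global sections, together with the conormal sequence $0 \to N_g^* \to g^*T^*X \to T^*\PP^1 \to 0$, yields the two lower bounds
\[
h^0(N_g^*(d)) \geq h^0(g^*T^*X(d))-(d-1), \qquad h^0(N_g^*(d)) \geq \sum_i h^0(N_{g_i}^*(d)).
\]
Once we produce $g$ satisfying $g^*TX \cong f^*TX$ and $h^0(N_{g_i}^*(d)) = h^0(N_{f_i}^*(d))$, these specialize to the ``$\geq$'' side of the claimed formula.

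\textbf{Reformulation.} Set $V_i = H^0(f_i^*T^*X_i(d))$, $W = H^0(\OO(d-2))$, and let $\phi_i^g\colon V_i \to W$ denote the map on global sections induced by the dual derivative $(dg_i)^\vee$; it depends linearly on $dg_i \in H^0(f_i^*TX_i(-2))$. The conormal sequence identifies $H^0(N_g^*(d))$ with the kernel of $\Sigma \circ \bigoplus_i \phi_i^g\colon \bigoplus_i V_i \to W$. Writing $r_i^g := \mathrm{rank}(\phi_i^g)$ and $r_i^f := \dim V_i - h^0(N_{f_i}^*(d))$, equality in the theorem amounts to choosing $g$ so that $r_i^g = r_i^f$ for each $i$ and the images $W_i^g \subset W$ are in general position: either in direct sum (if $\sum_i r_i^f \leq d-1$) or spanning $W$ (if $\sum_i r_i^f \geq d-1$).

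\textbf{Construction.} Varying $dg_i$ inside $H^0(f_i^*TX_i(-2))$ moves each $W_i^g$ linearly in $W$. In characteristic zero, a standard transversality argument (with $g_i$ ranging over the stratum around $f_i$ in which the rank $r_i^g$ does not jump) places the $W_i^g$'s into the required general position. In positive characteristic $p$, the image of $\phi_i^g$ acting on $H^0(\OO(p)) = H^0(T^*\PP^1(p+2))$ may fail to contain the Frobenius section $t^p$ even for generic $g_i$, obstructing the spanning requirement in the critical degree $d = p+2$. The surjectivity hypothesis removes exactly this obstruction: if $H^0(f_i^*T^*X_i(p+2)) \to H^0(T^*\PP^1(p+2))$ is surjective for some $i$, then $\phi_i^g$ is also surjective in this degree for small deformations, so $\Sigma \circ \bigoplus_i \phi_i^g$ is surjective there. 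The main obstacle is arranging the rank conditions $r_i^g = r_i^f$ and the general position conditions on the $W_i^g$'s to hold simultaneously for all relevant $d$; once this is done, the equality $h^0(N_g^*(d)) = \max(\ldots)$ follows directly from the two exact sequences.
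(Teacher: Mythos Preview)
Your reduction to the two lower bounds is correct, and your reformulation in terms of the images $W_i^g\subset W=H^0(\OO(d-2))$ is exactly the right setup. The gap is entirely in the paragraph labeled \textbf{Construction}, which is the whole content of the theorem.

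First, ``varying $dg_i$ inside $H^0(f_i^*TX_i(-2))$'' is not a deformation of $f_i$: the differential is determined by the map, not a free parameter, and a generic element of $H^0(f_i^*TX_i(-2))$ is not the derivative of any morphism $g_i$ with $g_i^*TX_i\cong f_i^*TX_i$. You therefore have not specified a family in which to run a transversality argument, and you have not checked that within such a family the conditions $g_i^*TX_i\cong f_i^*TX_i$ and $h^0(N_{g_i}^*(d))=h^0(N_{f_i}^*(d))$ are preserved. The final sentence of your proposal (``the main obstacle is arranging\ldots'') names the difficulty but does not resolve it.

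Second, even once a family is fixed, the claim that the $W_i^g$ can be moved into general position in $W$ is the nontrivial step and is not ``standard''. The paper's argument uses the specific deformation $g_i=f_i\circ\alpha_i$ with $\alpha_i\in\mathrm{Aut}(\PP^1)$; this choice automatically gives $g_i^*TX_i\cong f_i^*TX_i$ and $N_{g_i}\cong N_{f_i}$, so all your side conditions come for free. Transversality of $W_1^g$ and $\alpha W_2^g$ for general $\alpha\in SL_2$ is then Proposition~\ref{productsProp}, proved by showing (via the Wronskian map and a highest-weight argument) that the $SL_2$-span of the Pl\"ucker point of $W_2$ in $\Lambda^{\dim W_2}W$ contains the top irreducible summand, hence contains a subspace transverse to $W_1$. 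This argument genuinely uses $p=0$ or $p\geq d-1$; your treatment of positive characteristic, which singles out only the degree $d=p+2$, misses that the obstruction is the failure of the Wronskian criterion for polynomials of degree $\geq p$, not merely a missing Frobenius section. The role of the surjectivity hypothesis is to force $W_1^g=W$ for all $d\geq p+2$, so that Proposition~\ref{productsProp} only needs to be invoked in degrees $d\leq p+1$, where its characteristic bound is met.
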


The splitting type of a vector bundle on $\PP^1$ is determined by the cohomology of its twists. Hence, Theorem \ref{productsMain} determines the splitting type of $N_g$ in terms of $N_{f_i}$ and $f_i^*TX_i$. In general it is not possible to determine the $N_f$  in terms of $N_{f_i}$ and $f_i^*TX_i$; hence taking a deformation is crucial for our argument.
The deformation we use involves pre-composing the maps $f_i$ with automorphisms $\alpha_i$ of $\PP^1$. 

\begin{example}
Let $X_1$ and $X_2$ be smooth threefolds and for $i\in \{1,2\}$ let $f_i:\PP^1\ra X_i$ be immersions. Suppose $f_1^*(TX_1)\cong f_2^*(TX_2)\cong \OO(5)^{\oplus 2}\oplus \OO(6)$ and $N_{f_1}\cong N_{f_2}\cong \OO(6)\oplus \OO(8)$. Based on this information, the normal bundle of $C=f_1\times f_2(\PP^1)\subset X_1\times X_2$ in $X_1\times X_2$ could be as unbalanced as  
$\OO(5)^{\oplus 2}\oplus \OO(6)^{\oplus 2}\oplus \OO(8)$, but after a deformation will be $\OO(6)^{\oplus 5}$.
\end{example}
\begin{remark}
The bundle $N_{g}$ in Theorem \ref{productsMain} will not always be balanced because the normal bundles of $f_i(\PP^1)$ in each of the factor spaces $X_i$ need not have similar degrees. For instance, if $f_1:\PP^1\ra X_1$ and $f_2:\PP^1\ra  X_2$ are embeddings of smooth rational curves in smooth surfaces with self-intersection $d_1,d_2$ respectively, and $d_1,d_2<0$, then the normal bundle of the diagonal map $f:\PP^1\ra X_1\times X_2$ will be $\OO(2)\oplus \OO(d_1)\oplus \OO(d_2)$. Any deformation of $f(\PP^1)$ will also have this normal bundle.
\end{remark}

\subsection*{Organization of the paper} In \cref{hypersurfaceSections}, we prove Theorem \ref{mainApplied}. In \cref{sec-examples}, we apply Theorem \ref{mainApplied} to complete intersections in Grassmannians, flag varieties and some weighted projective spaces. In Section \cref{products}, we prove Theorem \ref{productsMain}.

\subsection*{Acknowledgments} We would like to thank Matheus Michalek, Eric Riedl, Bernd Sturmfels and Kevin Tucker for invaluable conversations about the subject matter of this paper.

\section{Complete intersections in SRC varieties}\label{hypersurfaceSections}
\subsection{Normal bundles of rational curves in complete intersections}
In this section, we will prove Theorem \ref{mainApplied}. Theorem \ref{mainApplied} is a consequence of the following result, which allows one to control the normal bundles of rational curves in certain complete intersections.
\begin{theorem}\label{main}
Let $X \subset \PP^n$ be a projective variety whose ideal sheaf is generated in degree $k$. Let $C$ be a rational normal curve of degree $e$ contained in the smooth locus of $X$. Let $c\leq \dim(X)-2$ be an integer. For $1 \leq i \leq c$, let $D_i = d_i H + E_i$ be Cartier divisor classes on $X$, where $d_i \geq \max(k,3)$, $H$ is the hyperplane class and $E_i$ are effective divisors such that the restriction map
\[
H^0(X,E_i)\ra H^0(C,E_i\vert_C)
\]
is surjective. Given a surjective map $$q\in \mathrm{Hom}\left(N_{C\vert X}, \bigoplus_{1\leq i \leq c} \OO(D_i)\vert_C\right),$$ there are hypersurfaces $Y_i$ with $[Y_i]=D_i$ such that if $Y=\bigcap_{i=1}^c Y_i$, then $Y$ is smooth of codimension $c$ along $C$ and $N_{C\vert Y} \cong \ker q$. 
\end{theorem}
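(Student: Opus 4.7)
The plan is to realize the given surjection $q$ as the tuple of first-order derivatives of sections $s_i \in H^0(X,\mathcal{I}_{C|X}(D_i))$, one for each $i$. Each $s_i$ cuts out a hypersurface $Y_i$ of class $D_i$ containing $C$, and the induced map $N_{C|X} \to N_{Y_i|X}|_C \cong \mathcal{O}(D_i)|_C$ is precisely the image of $s_i$ under the natural restriction
$$\Phi_i\colon H^0(X,\mathcal{I}_{C|X}(D_i)) \longrightarrow H^0(C, N_{C|X}^* \otimes \mathcal{O}(D_i)|_C) = \mathrm{Hom}\bigl(N_{C|X}, \mathcal{O}(D_i)|_C\bigr)$$
coming from the conormal sequence on $X$. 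The theorem thus reduces to showing each $\Phi_i$ is surjective and then lifting each component of $q$ under it.

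To obtain surjectivity of $\Phi_i$, I would start from the short exact sequence
$$0 \to \mathcal{I}_{C|X}^2(D_i) \to \mathcal{I}_{C|X}(D_i) \to N_{C|X}^* \otimes \mathcal{O}(D_i)|_C \to 0$$
and reduce to the vanishing of the connecting map, or more strongly to $H^1(X,\mathcal{I}_{C|X}^2(D_i)) = 0$. I would then pull the problem back to $\mathbb{P}^n$ via the sequences
$$0 \to \mathcal{I}_{X|\mathbb{P}^n}(D_i) \to \mathcal{I}_{C|\mathbb{P}^n}(D_i) \to \mathcal{I}_{C|X}(D_i) \to 0$$
and the analogous one for the squared ideals. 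Three ingredients should be used in concert: first, that $\mathcal{I}_X$ is generated in degree $k$ together with $d_i \geq k$ controls the part of the obstruction coming from $\mathcal{I}_{X|\mathbb{P}^n}$; second, the surjectivity hypothesis $H^0(X, E_i) \to H^0(C, E_i|_C)$ lets me bootstrap from the case $E_i = 0$ to arbitrary $E_i$ by tensoring with $E_i$; and third, the bound $d_i \geq 3$ is the classical numerical threshold that forces the relevant cohomology on $\mathbb{P}^n$ to vanish, exploiting that $C$ is a rational normal curve.

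Once each $\Phi_i$ is known to be surjective, I lift each component $q_i$ of $q$ to a section $s_i \in H^0(X,\mathcal{I}_{C|X}(D_i))$, set $Y_i := Z(s_i)$, and let $Y := \bigcap_i Y_i$. The pointwise surjectivity of $q$ at every $p \in C$ says the differentials $d_p s_i$ descend to linearly independent elements of $T_p^* X / T_p^* C$, so by the Jacobian criterion $Y$ is smooth of codimension $c$ along $C$. Then the conormal sequence for $C \subset Y \subset X$, together with the identification $N_{Y|X}|_C \cong \bigoplus_i \mathcal{O}(D_i)|_C$, expresses $N_{C|Y}$ as the kernel of the map $N_{C|X} \to \bigoplus_i \mathcal{O}(D_i)|_C$ induced by the $s_i$, which is exactly $q$ by construction.

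The main obstacle is the cohomology vanishing in the middle paragraph: even for $X = \mathbb{P}^n$ the sheaf $\mathcal{I}_{C|\mathbb{P}^n}^2$ is delicate, and for general $X$ one has to track both $\mathcal{I}_{X|\mathbb{P}^n}$ and the square of $\mathcal{I}_C$ simultaneously. The three numerical and surjectivity hypotheses on $d_i$, $k$, and $E_i$ are precisely calibrated to make all the relevant $H^1$ terms disappear; carrying out the bookkeeping cleanly—perhaps by filtering $\mathcal{I}_{C|X}^2$ through intermediate sheaves and reducing to the well-understood cohomology of twists on $\mathbb{P}^1$ and $\mathbb{P}^n$—is the technical crux of the argument.
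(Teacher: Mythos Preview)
Your overall architecture is exactly the paper's: reduce to showing each $\Phi_i$ is surjective (the paper calls this property ``$N^*_{C\vert X}$-surjective''), then lift the components of $q$ and read off smoothness and the normal bundle from the Jacobian criterion and the conormal sequence. That last step is the content of the paper's Proposition~2.6, and your paragraph on it is fine.

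Where you diverge is in how to obtain surjectivity of $\Phi_i$. You propose to prove the vanishing $H^1(X,\mathcal{I}_{C\vert X}^2(D_i))=0$, passing through $\PP^n$ via short exact sequences relating $\mathcal{I}_{C\vert X}$, $\mathcal{I}_{C\vert \PP^n}$, and $\mathcal{I}_{X\vert\PP^n}$, ``and the analogous one for the squared ideals.'' This is where the plan is shaky: squared ideals do not restrict cleanly, so there is no obvious short exact sequence relating $\mathcal{I}_{C\vert X}^2$ to $\mathcal{I}_{C\vert\PP^n}^2$ and $\mathcal{I}_{X\vert\PP^n}$; the kernel of $\mathcal{I}_{C\vert\PP^n}^2\to\mathcal{I}_{C\vert X}^2$ is $\mathcal{I}_{X\vert\PP^n}\cap \mathcal{I}_{C\vert\PP^n}^2$, whose cohomology you have no control over. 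Even on $\PP^n$ itself the vanishing $H^1(\mathcal{I}_{C\vert\PP^n}^2(d))=0$ is a nontrivial regularity statement that your hypotheses do not obviously give. You seem to sense this (``delicate'', ``technical crux''), but the bookkeeping you allude to does not straightforwardly close the gap.

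The paper avoids $\mathcal{I}^2$ entirely. Instead it proves the surjectivity of the map $H^0(\mathcal{I}_{C\vert\PP^n}(dH))\to H^0(N^*_{C\vert\PP^n}(dH))$ for $d\ge 3$ directly, via an explicit multiplication-map computation (Rathmann's theorem, reproved in the paper as Theorem~2.5): one writes down generators of $H^0(N^*_{C\vert\PP^n}(dH))$ and exhibits each as the image of a product of a quadric in $\mathcal{I}_C$ with a monomial on $C$. Passing from $\PP^n$ to $X$ is then a two-line commutative-square argument (Lemma~2.3) that uses only the vanishing $H^1(N^*_{X\vert\PP^n}(d_iH)\vert_C)=0$, which is immediate because this bundle on $\PP^1$ is globally generated (being a quotient of $\mathcal{I}_{X\vert\PP^n}(d_iH)$, globally generated since $d_i\ge k$). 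The bootstrap from $d_iH$ to $d_iH+E_i$ (Lemma~2.4) is essentially what you describe. So your plan has the right shape, but the paper's route to the key surjectivity is both easier and more robust than the $H^1(\mathcal{I}^2)$ vanishing you aim for.
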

To prove Theorem \ref{main}, we will use the following property of the divisor classes $D_i$.
\begin{definition}\label{def:N-surjective}
Let $X$ be a projective variety and let $C$ be a smooth curve contained in the smooth locus of $X$. For any Cartier divisor class $D$ on $X$, the exact sequence 
$$0 \to \OI_{C \vert X}^2 \to \OI_{C \vert X} \to N^*_{C\vert X} \to 0$$ induces a map $H^0(\OI_{C\vert X}(D))\ra H^0(N^*_{C\vert X}(D))$. If this map is surjective, we say $D$ is \emph{$N_{C\vert X}^*$-surjective}.
\end{definition}

We first prove two lemmas that show that certain divisors are $N_{C\vert X}^*$-surjective.

\begin{lemma}\label{lemma1}
Let $X\subset \PP^n$ be projective variety and let $C$ be a smooth curve contained in the smooth locus of $X$. Let $H$ denote the hyperplane class in $\PP^n$. Let $d$ be an integer such that $dH$ is $N^*_{C\vert \PP^n}$-surjective and $H^1(N^*_{X\vert \PP^n}(dH)\vert_C)=0$. Then $dH\vert_X$ is $N_{C\vert X}^*$-surjective.
\end{lemma}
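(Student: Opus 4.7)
The proof I envision is a short diagram chase based on the commutative square
\[
\begin{tikzcd}
H^0(\PP^n, \OI_{C|\PP^n}(d)) \arrow[r, "\alpha"] \arrow[d, "\beta"'] & H^0(C, N^*_{C|\PP^n}(d)) \arrow[d, "\gamma"] \\
H^0(X, \OI_{C|X}(d)) \arrow[r, "\delta"] & H^0(C, N^*_{C|X}(d))
\end{tikzcd}
\]
in which the horizontal maps $\alpha$ and $\delta$ come from the ideal-to-conormal exact sequences on $\PP^n$ and $X$ respectively, the left vertical $\beta$ is induced by the surjection $\OI_{C|\PP^n}\twoheadrightarrow i_*\OI_{C|X}$ (with $i:X\hookrightarrow\PP^n$), and the right vertical $\gamma$ is induced by the surjection $N^*_{C|\PP^n}\to N^*_{C|X}$ appearing in the conormal exact sequence for the triple $C\subset X\subset \PP^n$. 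Commutativity is routine: both compositions around the square are induced by the same ideal-to-conormal quotient.

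The plan is to show that $\alpha$ and $\gamma$ are both surjective and then finish by a diagram chase. Surjectivity of $\alpha$ is exactly the hypothesis that $dH$ is $N^*_{C|\PP^n}$-surjective. For $\gamma$, I would twist the conormal exact sequence
\[
0 \to N^*_{X|\PP^n}|_C \to N^*_{C|\PP^n} \to N^*_{C|X} \to 0
\]
by $\OO(d)$ and take cohomology on $C$; the cokernel of $\gamma$ embeds into $H^1(C, N^*_{X|\PP^n}(d)|_C)$, which vanishes by hypothesis.

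With these two surjectivities in hand, the chase is immediate. Given any $\sigma\in H^0(C, N^*_{C|X}(d))$, lift through $\gamma$ to some $\tilde\sigma\in H^0(C, N^*_{C|\PP^n}(d))$, then through $\alpha$ to some $\tau\in H^0(\PP^n, \OI_{C|\PP^n}(d))$; commutativity of the square forces $\delta(\beta(\tau)) = \sigma$, so $\delta$ is surjective and $dH|_X$ is $N^*_{C|X}$-surjective. I do not anticipate any genuine obstacle here: the argument is formal once the conormal exact sequence for the triple $C\subset X\subset\PP^n$ is written down, and the two hypotheses are tailored precisely to drive the chase. The only mild bookkeeping is to verify that the quotient $\OI_{C|\PP^n}\to i_*\OI_{C|X}$ factors the natural maps so that the square truly commutes, which follows by chasing local generators modulo $\OI_{C|\PP^n}^2$.
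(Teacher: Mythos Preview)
Your proof is correct and is essentially identical to the paper's own argument: the paper draws the same commutative square (with rows and columns transposed relative to yours), uses the conormal sequence for $C\subset X\subset\PP^n$ together with $H^1(N^*_{X|\PP^n}(dH)|_C)=0$ to get surjectivity of $\gamma$, and concludes by the same diagram chase.
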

\begin{proof}
Since $H^1(N^*_{X\vert \PP^n}(dH)\vert_C)=0$, the natural map $H^0(N^*_{C\vert \PP^n}(dH))\ra H^0(N^*_{C\vert X}(dH))$ is surjective. Since $dH$ is $N^*_{C\vert \PP^n}$-surjective, the copmposition $$H^0(\OI_{C\vert \PP^n}(dH))\ra  H^0(N^*_{C\vert X}(dH))$$ is also surjective. The lemma follows from the commutativity of the square
\[
\begin{tikzcd}
H^0(\OI_{C\vert \PP^n}(dH))\arrow[r]\arrow[d] &H^0(\OI_{C\vert X}(dH)) \arrow[d]\\
H^0(N^*_{C\vert \PP^n}(dH))\arrow[r]& H^0(N^*_{C\vert X}(dH)).
\end{tikzcd}
\]
\end{proof}

\begin{lemma}\label{lemma2}
Let $C$ be a smooth rational curve contained in the smooth locus of a proper variety $X$. Let $D$ and $E$ be Cartier divisor classes on $X$ with $E\cdot C\geq 0$ such  that 
\begin{enumerate}
    \item $D$ is $N_{C\vert X}^*$-surjective;
    \item $H^0(\OO_X(E))\ra H^0(\OO_C(E))$ is surjective.
    \item $N^*_{C\vert X}(D)$ is globally generated.
\end{enumerate}
Then $D+E$ is $N_{C\vert X}^*$-surjective.
\end{lemma}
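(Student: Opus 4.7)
The plan is to prove the surjectivity of $H^0(\OI_{C\vert X}(D+E)) \to H^0(N^*_{C\vert X}(D+E))$ by a multiplication argument that combines hypotheses (1) and (2), using that $C \cong \PP^1$ together with hypothesis (3). First, I would identify $N^*_{C\vert X}(D+E) \cong N^*_{C\vert X}(D) \otimes \OO_C(E)$ (since $N^*_{C\vert X}$ is supported on $C$, so $E$ acts only through its restriction), and observe that tensoring the defining sequence $0\to \OI_{C\vert X}^2\to \OI_{C\vert X}\to N^*_{C\vert X}\to 0$ with $\OO_X(D+E)$ gives a natural commutative diagram relating the $D$-twisted and $(D+E)$-twisted conormal sequences via multiplication by sections of $\OO_X(E)$.

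Next I would establish the key factorization on global sections. The multiplication map
\[
\mu:\ H^0(N^*_{C\vert X}(D))\otimes H^0(\OO_C(E))\ \longrightarrow\ H^0(N^*_{C\vert X}(D+E))
\]
is surjective. Indeed, writing $C\cong \PP^1$, hypothesis (3) forces $N^*_{C\vert X}(D)\cong \bigoplus_i \OO_{\PP^1}(a_i)$ with $a_i\geq 0$, while $\OO_C(E)\cong \OO_{\PP^1}(E\cdot C)$ has non-negative degree by assumption. On $\PP^1$ the multiplication of global sections of line bundles of non-negative degree is surjective, so $\mu$ is surjective summand by summand.

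Given any $\sigma\in H^0(N^*_{C\vert X}(D+E))$, I would write $\sigma=\sum_j s_j\otimes t_j$ via $\mu$. Hypothesis (1) lifts each $s_j$ to $\tilde s_j\in H^0(\OI_{C\vert X}(D))$, and hypothesis (2) lifts each $t_j$ to $\tilde t_j\in H^0(\OO_X(E))$. The product $\sum_j \tilde s_j\cdot \tilde t_j$ lies in $H^0(\OI_{C\vert X}(D)\otimes \OO_X(E))=H^0(\OI_{C\vert X}(D+E))$, and by the commutative diagram from the first step its image in $H^0(N^*_{C\vert X}(D+E))$ equals $\sum_j s_j\otimes t_j=\sigma$. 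This proves that $D+E$ is $N^*_{C\vert X}$-surjective.

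The only subtle step is the surjectivity of $\mu$, which fails without hypothesis (3); the rest is a formal diagram chase once one checks that multiplication by a global section of $\OO_X(E)$ intertwines the two conormal sequences in the expected way. The non-negativity assumption $E\cdot C\geq 0$ enters precisely to guarantee that multiplication on $\PP^1$ is surjective in this step.
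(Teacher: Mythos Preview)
Your proof is correct and follows essentially the same route as the paper: both factor through the commutative square relating $H^0(\OI_{C\vert X}(D))\otimes H^0(\OO_X(E))$ and $H^0(N^*_{C\vert X}(D))\otimes H^0(\OO_C(E))$ to their $(D+E)$-twisted counterparts, reduce to showing the multiplication map $\mu$ is surjective, and then conclude via hypotheses (1) and (2). The only cosmetic difference is in the justification of the surjectivity of $\mu$: you split $N^*_{C\vert X}(D)$ into non-negative line bundles on $\PP^1$ and argue summand by summand, while the paper instead uses the Lazarsfeld-type kernel bundle $M$ of the evaluation map and observes $H^1(M(E))=0$; on $\PP^1$ these two arguments are equivalent.
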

\begin{proof}
Consider the commutative square
\[
\begin{tikzcd}
H^0(\OI_{C\vert X}(D))\otimes H^0(\OO_X(E))\arrow[r]\arrow[d]& H^0(\OI_{C\vert X}(D+E))\arrow[d]\\
H^0(N^*_{C\vert X}(D))\otimes H^0(\OO_C(E))\arrow[r]&H^0(N^*_{C\vert X}(D+E))
\end{tikzcd}
\]
The vertical map on the left-hand side is surjective by the first two hypotheses. Since $N^*_{C\vert X}(D)$ is globally generated, we have an exact sequence  $$0\to M \to  H^0(N^*_{C\vert X}(D))\otimes \OO_C \to N^*_{C\vert X}(D) \to 0,$$ where $M$ is the kernel of the natural evaluation map. By construction $H^0(M)=0$ and the long exact sequence of cohomology implies that $H^1(M)=0$. Since $C$ is a rational curve and $E \cdot C \geq 0$, we have $H^1(M(E))=0$.  Twisting the sequence by $\OO_C(E)$ and taking cohomology, we see that the bottom horizontal map is surjective.  Hence, the composite map $H^0(\OI_{C\vert X}(D))\otimes H^0(\OO_X(E))\ra H^0(N^*_{C\vert X}(D+E)) $ is surjective. Therefore, the right vertical map $H^0(\OI_{C\vert X}(D+E))\ra H^0(N^*_{C\vert X}(D+E))$  is surjective as well.
\end{proof}

The following is a special case of a theorem of Rathmann. For completeness, we provide the proof.

\begin{theorem}[Theorem 3.1, \cite{Rat16}]\label{rathmann}
Let $C$ be a rational normal curve of degree $e$ in $\PP^n$. For $b\geq 1$, the map
\[
H^0(\mathcal{I}_{C\vert \PP^n}(2H))\otimes H^0(C,\OO_C(b))\ra H^0(N^*_{C\vert \PP^n}(2e+b))
\]
is surjective.
\end{theorem}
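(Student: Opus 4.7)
The plan is to reformulate the multiplication map as a cohomology vanishing on $C$ and then exploit the special structure of the rational normal curve's ideal. Set $V = H^0(\PP^n, \mathcal{I}_C(2))$. Because $C$ is a rational normal curve, its ideal is generated in degree at most two: by $n-e$ linear forms cutting out the linear span $\PP^e \supset C$, together with the $2\times 2$ minors of the catalecticant describing $C$ inside that span. In particular, quadrics locally generate $\mathcal{I}_C$ along $C$, so the sheaf evaluation map on $C$
\[
V \otimes \OO_C \longrightarrow \mathcal{I}_C(2)|_C = (\mathcal{I}_C/\mathcal{I}_C^2)(2H) = N^*_{C|\PP^n}(2e)
\]
is surjective. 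Let $K$ be its kernel, a vector bundle on $C\cong\PP^1$, and twist the resulting exact sequence by $\OO_C(b)$. Since $V\otimes \OO_C(b)$ is a trivial bundle twisted by $\OO_C(b)$, we have $H^1(V\otimes \OO_C(b))=0$ for $b\ge -1$, so the long exact sequence in cohomology identifies the cokernel of the map in the theorem with $H^1(C, K(b))$.

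Thus the problem reduces to showing $H^1(C,K(b))=0$ for $b\ge 1$, i.e.~that every line-bundle summand of $K$ has degree at least $-2$. This is the main content, and is where the 2-linear structure of the rational normal curve intervenes. I would first reduce to the case $n=e$ using the canonical splitting
\[
N^*_{C|\PP^n} \cong N^*_{C|\PP^e} \oplus \OO_{\PP^1}(-e)^{\oplus(n-e)},
\]
coming from $\mathrm{Ext}^1(\OO(e),\OO(e+2))=0$, together with the fact that the extra linear forms in $\mathcal{I}_{C|\PP^n}$ contribute surjectively onto the second summand (tensored with $H^0(\OO_C(b))$) by the standard base-point-free pencil argument on $\PP^1$. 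In the remaining case $n=e$, I would invoke the minimal free resolution of $\mathcal{I}_C$ (the Eagon--Northcott complex of the catalecticant), which is $2$-linear with $F_i = \OO_{\PP^e}(-i-2)^{\oplus a_i}$, and read off $K$ as the pullback to $C$ of the first-syzygy term after twisting by $\OO(2)$. The $2$-linearity forces $K$ to be a direct sum of copies of $\OO_{\PP^1}(-2)$, giving exactly the bound required.

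The main obstacle is this last identification of $K$: one must verify that restricting the Eagon--Northcott resolution to $C$ produces a complex whose image in $V\otimes \OO_C$ accounts for all syzygies up to the desired degree, and that no additional negative-degree contribution arises from higher Tor terms. A more concrete fallback, if the syzygy computation proves awkward, is to parametrize $C$ by the $e$-th Veronese $[s^e\!:\!s^{e-1}t\!:\!\cdots\!:\!t^e]$, write an explicit basis of $V$ from the catalecticant minors, and verify by a direct monomial calculation (only needed for $b=1$) that products of such a quadric with a linear form in $s,t$ already span $H^0(N^*_{C|\PP^n}(2e+1))$; the case $b\ge 2$ then follows by multiplying further by $H^0(\OO_{\PP^1}(b-1))$.
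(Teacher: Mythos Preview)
Your cohomological reformulation is sound: with $K=\ker\bigl(V\otimes\OO_C\to N^*_{C|\PP^n}(2e)\bigr)$ the surjectivity for a given $b$ is exactly $H^1(K(b))=0$, and the reduction to $n=e$ goes through as you say. The gap is the step you yourself flag. When you restrict the Eagon--Northcott resolution $\cdots\to P_1\to P_0\to\mathcal I_C\to 0$ to $C$ and twist by $\OO(2)$, you get $P_0(2)|_C=\OO_{\PP^1}^{\oplus\dim V}$ but $P_1(2)|_C=\OO_{\PP^1}(-e)^{\oplus a_1}$, because $\OO_{\PP^e}(-1)|_C=\OO_{\PP^1}(-e)$. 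Right-exactness of $-\otimes\OO_C$ shows $K=\mathrm{im}\bigl(P_1(2)|_C\to P_0(2)|_C\bigr)$, so all you learn is that $K$ is a quotient of $\OO(-e)^{\oplus a_1}$; the $2$-linearity on $\PP^e$ becomes an $(-e)$-bound on $C$, which is far too weak. The Tor obstruction you worry about is genuinely nonzero here ($\mathrm{Tor}_1(\mathcal I_C,\OO_C)\cong\wedge^2 N^*_C$), so $P_1(2)|_C$ does not compute $K$ on the nose. In fact, for $n=e$ one has $\mathrm{rk}\,K=\binom{e-1}{2}$ and $\deg K=-2\binom{e-1}{2}$, so the assertion $K\cong\OO(-2)^{\oplus\binom{e-1}{2}}$ is literally \emph{equivalent} to the $b=1$ map being an isomorphism; your primary route collapses onto the fallback.

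That fallback is exactly what the paper does, and it is short. The paper parametrizes $C$ by $(s^e,s^{e-1}t,\ldots,t^e,0,\ldots,0)$, writes the conormal basis as $q_i=s^2\,dx_{i+2}+t^2\,dx_i-2st\,dx_{i+1}$ for $0\le i\le e-2$ together with $dx_j$ for $j>e$, and then for every monomial $s^kt^\ell q_i$ (resp.\ $s^kt^\ell\,dx_j$) in $H^0(N^*_{C|\PP^n}(2e+b))$ exhibits an explicit preimage of the form $f_{i,\ast}\otimes s^a t^{b-a}$ (resp.\ $x_\ast x_j\otimes s^a t^{b-a}$), where $f_{i,j}=x_ix_j-x_{i+1}x_{j-1}$. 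The computation is uniform in $b\ge 1$ and no harder than the single case $b=1$ you propose, so there is nothing gained by treating $b=1$ separately and bootstrapping.
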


\begin{proof}
Let $C$ be the rational normal curve embedded via the map $$(s,t) \mapsto (s^e,s^{e-1}t,\ldots,t^e,0,\ldots,0).$$ The ideal of $C$ is generated by the quadrics $f_{i,j}:=x_ix_j-x_{i+1}x_{j-1}$ for $1 \leq i < j \leq e$ and the linear forms $x_{e+1},\ldots,x_n$. Moreover, $$N^*_{C\vert \PP^n} \cong \OO(-e)^{\oplus n-e}\oplus \OO(-e-2)^{\oplus e-1},$$ where the terms in the former summand come from the sections
$dx_i\in H^0(N^*_{C\vert \PP^n}(e))$ ($e<i\leq n$) 
, and the terms in the latter summand can be chosen to be  sections $q_i\in H^0(N^*_{C\vert \PP^N}(e+2))$ given by 
\[
q_i=s^2dx_{i+2}+t^2 dx_i-2st dx_{i+1}
\] 
with $0\leq i\leq e-2$. 
A basis of global sections of $N^*_{C\vert \PP^n}(2e+b)$ then consists of sections $s^kt^\ell q_i$ with $k+\ell=e+b-2$ and sections $s^kt^\ell dx_i$ with $i>e$ and $k+\ell=e+b$. If $k\geq b-1$, the section $s^kt^\ell q_i$ is the image of $f_{i,\ell+1}\otimes s^{b-1}t-f_{i+1,\ell+1}\otimes s^b$, and if $k<b-1$, $s^kt^\ell q_i$ is the image of $f_{i,\ell-b+2}\otimes t^b-f_{i+1,\ell-b+2}\otimes st^{b-1}$. Likewise, given $i>e$, the section $s^kt^\ell dx_i$ is the image of $ x_\ell x_i\otimes s^b$ if $k\geq b$, or $x_{\ell-b}x_i\otimes t^b$ if $k=0$. Hence, the map is surjective.
\end{proof}

\begin{proposition}\label{specifiedNormalBundle}
Let $f:C\ra X$ be an immersion of a smooth rational curve $C$ in a variety $X$ smooth along $C$. Let $D_1$, \ldots, $D_c$  Cartier divisor classes on $X$  that are $N_{C\vert X}^*$-surjective. Then, given a surjection $$q:N_{C\vert X}\ra \bigoplus_{1\leq i\leq c} \OO_C(D_i)$$ there exist divisors $Y_i$ containing $C$ with class $D_i$ such that $Y:=\bigcap_{1\leq i \leq c} Y_i$ is smooth of codimension $c$ in $X$ along $C$ and the inclusion  $N_{C\vert Y}\ra N_{C\vert X}$ is the kernel of $q$. 
\end{proposition}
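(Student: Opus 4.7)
The plan is to extract the hypersurfaces $Y_i$ directly from the data of the surjection $q$ by dualizing and lifting, then verify smoothness and identify the normal bundle using the conormal exact sequence. First I would dualize $q$ to obtain a subbundle inclusion
\[
q^*\colon \bigoplus_{i=1}^c \OO_C(-D_i)\hookrightarrow N^*_{C\vert X},
\]
whose cokernel is $(\ker q)^*$, since $q$ is a surjection between vector bundles on $\PP^1$. Its $i$-th component amounts to a nonzero section $\sigma_i\in H^0(N^*_{C\vert X}(D_i))$ (nonzero because otherwise $q$ would miss the $i$-th summand). The $N_{C\vert X}^*$-surjectivity hypothesis then lets me lift each $\sigma_i$ to some $\tilde{\sigma}_i\in H^0(\OI_{C\vert X}(D_i))\subset H^0(\OO_X(D_i))$. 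I take $Y_i$ to be the zero locus of $\tilde{\sigma}_i$, so that $Y_i$ has class $D_i$ and contains $C$, and set $Y:=\bigcap_{i=1}^c Y_i$.

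Next I would check that $Y$ is smooth of codimension $c$ along $C$. The key point is that under the identification $\OI_{C\vert X}/\OI_{C\vert X}^2=N^*_{C\vert X}$ built into \cref{def:N-surjective}, the image of $\tilde{\sigma}_i$ in $H^0(N^*_{C\vert X}(D_i))$ is $\sigma_i$ by construction. Working locally at $p\in C$ after trivializing each $\OO_X(D_i)$, the $\tilde{\sigma}_i$ become regular functions $f_i$ vanishing on $C$, and the differentials $df_i(p)\in T^*_pX$ automatically lie in the fiber $N^*_{C\vert X}(p)$ and coincide there with $\sigma_i(p)$. Because $q^*$ is a subbundle inclusion, the values $\sigma_1(p),\ldots,\sigma_c(p)$ are linearly independent at every $p\in C$, so the Jacobian of $(f_1,\ldots,f_c)$ has rank $c$ all along $C$. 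By the Jacobian criterion, $Y$ is smooth of codimension $c$ along $C$, and moreover the resulting map $\bigoplus_{i=1}^c \OO_C(-D_i)\to N^*_{Y\vert X}\vert_C$ is an isomorphism of rank-$c$ vector bundles.

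Finally, the conormal exact sequence for $C\subset Y\subset X$ reads
\[
0\to N^*_{Y\vert X}\big\vert_C \to N^*_{C\vert X}\to N^*_{C\vert Y}\to 0,
\]
and by the previous paragraph its first arrow is identified with $q^*$. Dualizing gives $N_{C\vert Y}\cong \ker q$, embedded in $N_{C\vert X}$ as the kernel of $q$, which is the statement. I do not expect a serious obstacle here; the most delicate step is the bookkeeping in the second paragraph, where one has to confirm cleanly that the differential of the lift $\tilde{\sigma}_i$ along $C$ recovers the originally chosen section $\sigma_i$ in $N^*_{C\vert X}(D_i)$---this is exactly the content of $N^*_{C\vert X}$-surjectivity, and once it is in hand the Jacobian criterion and conormal sequence do the rest.
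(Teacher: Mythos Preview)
Your argument is correct and follows the same route as the paper's proof: lift the components of (the dual of) $q$ through the $N^*_{C\vert X}$-surjectivity hypothesis to sections of $\OI_{C\vert X}(D_i)$, take their zero loci as the $Y_i$, and then use the fact that $q$ is a surjection of bundles to see that $Y$ is smooth of codimension $c$ along $C$ with the prescribed normal bundle. The only cosmetic difference is that the paper phrases the smoothness step on the tangent side (showing the induced map $TX\vert_C\to\bigoplus N_{Y_i\vert X}\vert_C$ is surjective and identifying $TY\vert_C$ with its kernel), whereas you phrase it dually via the Jacobian criterion and the conormal sequence; these are the same computation.
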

\begin{proof}
The map $q$ is equivalently a global section of $\bigoplus_{1\leq i\leq c} N^*_{C\vert X}(D_i)$.
Since each $D_i$ is $N^*_{C\vert X}$-surjective, there is some $s\in H^0(C,\bigoplus_{1\leq i\leq c} \mathcal{I}_{C\vert X}(D_i))$ such that $q$ is the image of $s$ under the natural map
\[
H^0(C,\bigoplus_{1\leq i\leq c} \mathcal{I}_{C\vert X}(D_i))\ra H^0(C,\bigoplus_{1\leq i\leq c} N^*_{C\vert X}(D_i)).
\]
The section $s$ induces global sections $s_i$ of each $\mathcal{I}_{C\vert X}(D_i)$. Set $Y_i=V(s_i)$. 
Then each $Y_i$ contains $C$, and has class $D_i$. Moreover, the sections $s_i$ give canonical isomorphisms $h_i:\OO_{Y_i} (D_i)\vert_C\ra N_{Y_i\vert X}\vert_C$, and the natural map
\[
q':N_{C\vert X}\ra \bigoplus_{1\leq i\leq c} N_{Y_i\vert X}\vert_C
\]
is $h\circ q$, where $h: \bigoplus_{1\leq i\leq c} \OO_C(D_i) \ra \bigoplus_{1\leq i\leq c} N_{Y_i\vert X}\vert_C$  is the direct sum of the $h_i$.
The map $q'$ induces a surjection of vector bundles,
\[
q'':TX\vert_C\ra \bigoplus_{1\leq i\leq c} N_{Y_i\vert X}\vert_C.
\]
Set $Y=\bigcap_{1\leq i \leq c} Y_i$.
Since $q''$ is surjective, it does not drop rank at any point of $C$; in particular, the fiber of $\ker(q'')$ at a point $p\in C$ consists of all $v\in T_pX$ that are contained in each of the tangent spaces $T_pY_i$.
Hence, $TY\vert_C\cong \ker (q'')$, and $TY\vert_C$ is a vector bundle of rank $\dim(X)-c$. Therefore, $Y$ is smooth of codimension $c$ along $C$, and we have that $TY\vert_C\ra TX\vert_C$ is the kernel of $q''$. 
The induced map of normal bundles $N_{C\vert Y}\ra N_{C\vert X}$ is the kernel of $q$.
\end{proof}

\begin{remark}
Whether $\ker q$ is balanced for general $Y_i$ depends on the starting bundles. For example,  the general kernel of $\OO\oplus \OO(2)^{\oplus 2}\ra \OO(2)$  is $\OO\oplus \OO(2)$ and not balanced. If the starting normal bundle is balanced and the assumptions of Proposition \ref{specifiedNormalBundle} hold, then the normal bundle of the complete intersection will stay balanced. 
\end{remark}

\begin{proof}[Proof of Theorem \ref{main}]
Let $C$ be the rational normal curve of degree $e$ on $X$. First, we show that $D_i$ is $N_{C|X}^*$-surjective. Since $d_i \geq 3$, by Theorem \ref{rathmann}, $d_iH$ is $N_{C|\PP^n}^*$-surjective.
The sheaf $N^*_{X\vert \PP^n}(d_iH)$ is  globally generated, since it is the quotient of the globally generated sheaf $\OI_{X\vert \PP^N}(d_iH)$. Hence, $H^1(C, N^*_{X\vert \PP^n}(d_iH)\vert_C)=0$. By Lemma \ref{lemma1},  we conclude that $d_iH$ is $N_{C|X}^*$-surjective. Lemma \ref{lemma2} then implies that $D_i=d_iH+E_i$ is $N_{C|X}^*$-surjective. The theorem is now a consequence of Proposition \ref{specifiedNormalBundle}.
\end{proof}
\subsection{Proof of Theorem \ref{mainApplied}}
 We now prove Theorem \ref{mainApplied} using Theorem \ref{main}. The proof requires a couple lemmas.

 \begin{lemma}\label{globallyGeneratedKernel}
Let $E$ and $F$ be globally generated vector bundles on $\PP^1$. Assume that $\shom(E,F)$ is  globally generated. If $\mathrm{rk}(E)> \mathrm{rk}(F)$ and $\deg(E)\geq \deg(F)$, then the kernel of a general map $E\ra F$ is globally generated.
\end{lemma}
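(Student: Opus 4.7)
The strategy is to induct on $n=\mathrm{rk}(F)$. The central reduction is cohomological: since $E$ is globally generated, $H^1(E(-1))=0$, so the long exact sequence attached to $0\to K(-1)\to E(-1)\to F(-1)\to 0$ gives $H^1(K(-1))=\mathrm{coker}(\phi_\ast\colon H^0(E(-1))\to H^0(F(-1)))$, where $K=\ker\phi$. On $\PP^1$, a vector bundle is globally generated iff $H^1$ of its $(-1)$-twist vanishes, so we need $\phi$ to be sheaf-surjective (so that $K$ is a bundle) and $\phi_\ast$ surjective. Both conditions are open on $\phi$, so by upper semi-continuity it suffices to exhibit one $\phi$ realizing them.

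For the base case $F=\OO(b)$, write $\phi=(\phi_1,\ldots,\phi_m)$ with $\phi_i\in H^0(\OO(b-a_i))$, and take each $\phi_i$ to be a monomial in the coordinates $s,t$ on $\PP^1$. Choose $r$ with $\sum_{i<r}a_i<b\leq\sum_{i\leq r}a_i$ (possible since $\sum a_i\geq b$); set $p_i=\sum_{j<i}a_j$ for $i<r$, $p_r=b-a_r$, and $\phi_i=s^{p_i}t^{b-a_i-p_i}$ for $i\leq r$ (with $\phi_i=0$ otherwise). Then $\phi_i\cdot H^0(\OO(a_i-1))$ is the span of $\{s^kt^{b-1-k}:k\in[p_i,p_i+a_i-1]\}$, and the intervals $[p_i,p_i+a_i-1]$ for $i\leq r$ cover $[0,b-1]$ by construction, so $\phi_\ast$ is surjective. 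Sheaf-surjectivity holds because $\phi_1$ is nonzero at $s=0$ and $\phi_r$ is nonzero at $t=0$, while at any point with $s,t\neq 0$ every nonzero monomial $\phi_i$ is nonvanishing.

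For the inductive step, reorder so that $b_n=\min_j b_j$, write $F=F'\oplus\OO(b_n)$, and decompose $\phi=(\phi',\phi_n)$. By the base case, a generic $\phi_n$ has globally generated kernel $K_n\subset E$ of rank $m-1$. Since any line subbundle of $E$ has degree at most $\max_i a_i\leq\min_j b_j$, the same bound holds for the summand degrees of $K_n$, which ensures $\shom(K_n,F')$ is globally generated; the inequalities $\mathrm{rk}(K_n)>\mathrm{rk}(F')$ and $\deg K_n\geq\deg F'$ are immediate. Using the identity $\ker\phi=\ker(\phi'|_{K_n})$, the inductive hypothesis applied to $K_n\to F'$ shows its kernel is globally generated for generic $\psi\in\Hom(K_n,F')$. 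The restriction $\Hom(E,F')\to\Hom(K_n,F')$ is surjective because $H^1(F'(-b_n))=0$ (as $b_j\geq b_n$ for every $j$), so a generic $\phi'$ restricts to a generic $\psi$, completing the induction.

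The main technical obstacle is the explicit monomial construction in the base case, which handles both open conditions at once; the inductive step is essentially formal bookkeeping, provided one is careful to split off a smallest-degree summand of $F$ at each stage so that the restriction map on $\Hom$'s is surjective.
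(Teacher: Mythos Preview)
Your proof is correct and follows essentially the same approach as the paper's: an explicit monomial construction for the line-bundle base case showing $H^0(E(-1))\to H^0(F(-1))$ is onto, followed by an induction on $\mathrm{rk}(F)$ that peels off one summand of $F$ at a time, with an $\mathrm{Ext}^1$ vanishing ensuring the restriction map on $\Hom$'s is surjective. The only cosmetic difference is that you split off the \emph{smallest} summand $\OO(b_n)$ first and then apply the inductive hypothesis to $K_n\to F'$, whereas the paper first applies the inductive hypothesis to $E\to F'$ (with the \emph{largest} summand removed) and then uses the line-bundle case for $K\to\OO(b_s)$; both orderings yield the same $\mathrm{Ext}^1$ vanishing and the same conclusion.
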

\begin{proof}
We first prove the result if $F$ is a line bundle $\OO(b)$. By the hypotheses, we have $E\cong \bigoplus_{1\leq i \leq r} \OO(a_i)$ with $0\leq a_i\leq b$ for every $i$, and $a_1+\cdots+a_r\geq b$.
Reorder the $a_i$ such that 
\[0=a_1=\ldots=a_{r'}<a_{r'+1}\leq \ldots\leq a_r.\] 
Given $i$ with $r'\leq i\leq r$, define
\[
A_i=\twopartdef{0}{i=r'}{\min(b,a_{r'+1}+\cdots+a_i)}{i>r'}.
\]

Define the map $\phi:E\ra F$ by the matrix
\[
\phi=\begin{bmatrix}
0&\ldots&0&s^{b-A_{r'+1}}&s^{b-A_{r'+2}}t^{A_{r'+1}}&\ldots&s^{b-A_{i}}t^{A_{i-1}}&\ldots&s^{b-A_r}t^{A_{r-1}}
\end{bmatrix}.
\]
Then $\phi$ is surjective, and it induces a surjection $H^0(E(-1))\ra H^0(F(-1))$. To see this, consider the global section $s^{b-j-1}t^j\in H^0(F(-1))$. There is some $r'+1\leq i\leq r$ such that $A_{i-1}\leq j< A_i$. Then  $s^{b-j-1}t^j$ is the image of the section $s^{A_i-j-1}t^{j-A_{i-1}}\in H^0(\OO(a_i-1))$ under $\phi$.
If $K=\ker(\phi)$, then $H^1(K(-1))=0$, so $K$ is globally generated. Hence, if $E\ra F$ is general, its kernel will also be globally generated.

We now proceed by induction on the rank of $F$.
We have proven the result if $\rk(F)=1$.

Suppose the result has been proven for all $F$ of rank at most $s-1$. We establish the result for $F$ of rank $s$.
Let $F\cong \bigoplus_{1\leq j\leq s} \OO(b_j)$ and $E\cong \bigoplus_{1\leq i\leq r}\OO(a_i)$  with
\[
b_s\geq \ldots\geq b_1\geq a_r\geq \ldots\geq a_1
\]
and $\sum_i a_i\geq \sum_j b_j$.  Let $f:E\ra F$ be general, and set $F'=\bigoplus_{1\leq j\leq s-1} \OO(b_j)$
By the inductive hypothesis, the restricted map $E\ra F'$ has globally generated kernel $K$. 
Moreover, the map $\mathrm{Hom}(E,\OO(b_s))\ra \mathrm{Hom}(K,\OO(b_s))$ is surjective since $\mathrm{Ext}^1(F',\OO(b_s))=0$. 
So the induced map $f:K\ra \OO(b_s)$ is also general, and has globally generated kernel by the result for line bundles.
\end{proof}
The next lemma is a variant of \cite[Theorem IV.3.11]{Kol96}.
\begin{lemma}\label{generalization}
Let $g:X\ra S$ be a flat morphism with a connected base $S$.
Suppose that for some $s\in S$, $X_s$ has an immersion $i_s:\PP^1\ra X_s^{sm}$ such that $i^*TX_s$ is ample. 
Then there is an open subset $S'\subset S$ such that for any $s'\in S'$, there is an immersion $i_{s'}:\PP^1\ra X_{s'}^{sm}$ where $i^*TX_{s'}$ is ample.
\end{lemma}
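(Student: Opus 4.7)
The plan is to show that admitting such an immersion is a Zariski-open condition on $S$ via deformation theory applied to a relative space of morphisms. First, I would form the relative Hom-scheme $\pi: \mathrm{Hom}_S(\PP^1_S, X) \ra S$, which is representable in our setting (and in all applications of this lemma; the paper applies it to families with projective fibers). The given immersion $i_s$ corresponds to a $k$-point $[i_s]$ in the fiber $\pi^{-1}(s)$.

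Next, I would identify the open subset $U \subset \mathrm{Hom}_S(\PP^1_S, X)$ consisting of morphisms $f:\PP^1 \ra X_{\pi(f)}$ which are immersions into $X_{\pi(f)}^{sm}$ with $f^*TX_{\pi(f)}$ ample. The immersion condition (injectivity of $df$ at every point) and the condition of factoring through the relative smooth locus are both standard open conditions. For the ampleness condition, I use that a vector bundle $E$ on $\PP^1$ is ample iff every summand has positive degree, equivalently $h^1(\PP^1, E(-2))=0$; applying semicontinuity to the family of bundles $f^*TX_{\pi(f)}$ parametrized by $\mathrm{Hom}_S(\PP^1_S,X)$ shows this is open. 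By hypothesis $[i_s] \in U$.

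The key step is then a deformation-theoretic smoothness assertion for $\pi$ at $[i_s]$. The obstruction to extending $i_s$ over an infinitesimal thickening in $S$ lies in $H^1(\PP^1, i_s^*TX_s)$, where I use flatness of $g$ to ensure the relative tangent complex of $X/S$ restricts to $TX_s$ on the fiber. Since $i_s^*TX_s$ is ample on $\PP^1$, it splits as a direct sum of line bundles of strictly positive degree, so $H^1(\PP^1, i_s^*TX_s)=0$. Hence $\pi$ is smooth at $[i_s]$, and therefore smooth in a Zariski-open neighborhood $V$ of $[i_s]$. Smooth morphisms are open on their source, so $S' := \pi(V \cap U) \subset S$ is open and contains $s$; for any $s'\in S'$ any preimage in $V\cap U$ yields the required immersion $i_{s'}$.

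The main technical point, and really the only one requiring care, is making the relative obstruction calculation airtight: one must confirm that the obstruction to deforming $i_s$ as a morphism to the fibers of $X\ra S$ lives in $H^1(\PP^1, i_s^*TX_s)$ and not in some larger group involving the total tangent bundle of $X$. This is standard for flat $g$, and is the only reason flatness appears in the hypothesis. The connectedness hypothesis is used to conclude that $S'$ is a nonempty open subset of the irreducible/connected base (otherwise the statement is vacuous on other components).
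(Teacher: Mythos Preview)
Your proposal is correct and follows essentially the same route as the paper's proof: both form the relative Hom-scheme over $S$, observe that the desired conditions (immersion into the smooth locus, ample restricted tangent bundle) cut out an open set, and then use vanishing of $H^1(\PP^1,i_s^*TX_s)$ to conclude smoothness of the projection at $[i_s]$, so that its image is open. The only cosmetic difference is that the paper restricts the target to the relative smooth locus $U\subset X$ from the outset and cites \cite[Theorem IV.3.11]{Kol96} for the smoothness and openness statements, whereas you spell out the obstruction-theoretic argument directly.
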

\begin{proof}
Let $U\subset X$ be the open subset of $X$ on which $g$ is smooth. 
The set $U$ includes the image of the immersion $i_s$.
By the proof of \cite[Theorem IV.3.11]{Kol96}, there is an open subset $V$ of the Hom-scheme $\Hom_S(\PP^1_S,U)$ including $i_s$ such that for any $i'\in V$ the pullback $i'^*(T_{U/S})$ is ample.
Moreover, the projection $\Hom_S(\PP^1_S,U)\ra S$ is smooth in a neighborhood of $i_s$.
Hence, the image of $V$ in $S$ contains an open set $S'\ni s$ with the desired properties.
\end{proof}
 
\begin{proof}[Proof of Theorem \ref{mainApplied}]
Let $X\subset \PP^n$, $D_1,\ldots, D_c$, and $C$ be as in the statement of the theorem.
By Theorem \ref{main}, given a general surjection $q:N_{C\vert X}\ra \bigoplus_{1\leq i\leq c}\OO(D_i)\vert_C$, there exist hypersurfaces $\tilde{Y}_1,\ldots,\tilde{Y}_c$ with  $[\tilde{Y}_i]=D_i$ such that $\tilde{Y}=\bigcap_{1\leq i\leq c}\tilde{Y}_i$ is smooth along $C$ and $N_{C\vert \tilde{Y}}\cong \ker(q)$. 
Since $C$ is a rational normal curve in projective space, we have that every direct summand of $N_{C\vert X}$ has degree at most $\deg(C)+2$ by\cite[Corollary 2.6]{CR19}, so the bundle $N^*_{C\vert X}\otimes \bigoplus_{1\leq i\leq c}\OO(D_i)\vert_C$ is globally generated.
In addition, $q$ is general and $\deg(N_{C\vert \tilde{Y}})\geq m-c-1$ by hypothesis. So $N_{C\vert \tilde{Y}}$ is ample by Lemma \ref{globallyGeneratedKernel} applied to the morphism $N_{C\vert X}(-1)\ra \bigoplus_{1\leq i\leq c}\OO(D_i)\vert_C(-1)$.

Since smoothness and ampleness are open in families, if $Y_1,\ldots,Y_c$ are general hypersurfaces containing $C$, then $Y=\bigcap_{1\leq i \leq c} Y_i$ is smooth along $C$ and $N_{C\vert Y}$ is ample.

Moreover, $Y$ is an irreducible  variety of dimension $m-c$, as we now show.
Let $\tilde{X}=\Bl_C(X)$ with exceptional divisor $E$. The divisor $3H-E$ is very ample on $\tilde{X}$, as the restriction of the analogous very ample divisor $3H-E$ on $\Bl_C(\PP^n)$ to $\tilde{X}$. Since $(d_i-3)H+E_i$ is base-point free, the divisor classes $D_i-E$ on $\tilde{X}$ are also very ample. So, by the Bertini irreducibility theorem \cite[Theorem 1.1]{Ben11}, the complete intersection of $c\leq m-2$ general hypersurfaces of classes $D_1-E, \ldots,D_c-e$ on $\tilde{X}$ is irreducible. So $Y$ is likewise irreducible of dimension $m-c$

So there is a very free rational curve in the smooth locus of $Y$, and every component of $Y$ has dimension $m-c$ .
If $U$ is the family of all complete intersections of hypersurfaces of classes $D_1,\ldots,D_c$, and $\pi:\mathcal{Y}\ra U$ the universal complete intersection, then $\pi$ is flat by \cite[Theorem 23.1]{Mat86}.
So by Lemma \ref{generalization}, the general complete intersection $Y_u$ in this family contains a very free rational curve in its smooth locus, and is hence SRC.
\end{proof}

\section{Applications of Theorem \ref{mainApplied}}\label{sec-examples}
In this section, we use Theorem \ref{mainApplied} to prove that certain types of varieties are SRC. To apply the theorem to complete intersections on a particular variety $X$, we need to find a very free rational curve on $X$, linearly normal with respect to the ample class on $X$, that has sufficiently large degree that its restriction to complete intersections could be very free. 
 We handle the two cases of Theorem \ref{someMoreSRCVarieties} as the following two lemmas.

 Let $\Gr(k,n)$ denote the Grassmannian parameterizing $k$-dimensional subspaces of an $n$-dimensional vector space $V$. More generally,  for a sequence of nonnegative integers $0\leq k_1<k_2<\cdots<k_r\leq n$, the \emph{partial flag variety} $\Fl(k_1,\ldots,k_r;n)$ is  the parameter space of all partial flags
 \[
 0\subseteq V_{k_1}\subset V_{k_2}\subset\cdots\subset V_{k_r}\subseteq V, 
 \]
 where each $V_{k_i}$ is a $k_i$-dimensional vector space. 
\begin{lemma}\label{fl}
Let $X=\Fl(k_1,\ldots,k_r;n)$ be a flag variety. 
Let $X\ra \Gr(k_1,n)\times \cdots \times \Gr(k_r,n)$ be the canonical embedding, and let $H$ be the sum of the pullbacks of the hyperplane classes on each $\Gr(k_i,n)$. For each $i$ with $1\leq i \leq c$, let $D_i$ be a divisor class on $X$ such that $D_i-3H$ is effective. Let $Y$ be the general complete intersection of $c$ hypersurfaces of class $D_1,\ldots, D_c$. If $-K_X-D_1-\cdots-D_c$ is ample on $X$, then $Y$ is SRC.
\end{lemma}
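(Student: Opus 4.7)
The plan is to apply Theorem \ref{mainApplied} to $X$, the hypersurfaces $Y_i$, and a suitably chosen rational normal curve $C$. First I would verify the structural hypotheses on $(X, H)$. The flag variety $X$ is smooth and projective, hence Cohen--Macaulay. Embedded via $H$ (Pl\"{u}cker on each Grassmannian factor followed by the Segre embedding of the product, restricted to $X$), $X$ is linearly normal in the resulting $\PP^N$, and its homogeneous ideal is generated in degrees at most two: Pl\"{u}cker quadrics on each factor, the $2\times 2$-minor Segre quadrics, and the quadratic incidence relations between Pl\"{u}cker coordinates of consecutive subspaces in the flag. So in the notation of Theorem \ref{mainApplied} one may take $k=2$ and decompose $D_i = 3H + E_i$ with $E_i := D_i - 3H$ effective by hypothesis; then $d_i = 3 \geq \max(k, 3)$.

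Next I check that $(d_i-3)H + E_i = E_i$ is base-point-free. The Picard group $\mathrm{Pic}(X)$ is freely generated by the pullbacks $H_1,\ldots,H_r$ of the hyperplane classes from the Grassmannian factors, and the effective cone of $X$ is the non-negative integral span of these $H_j$'s. Each $H_j$ is globally generated (being the pullback of $\OO(1)$ under a morphism to $\Gr(k_j,n)$), so every effective divisor class on $X$ is globally generated; in particular so is $E_i$.

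The essential remaining step is to exhibit a very free rational normal curve $C \subset X$ of sufficiently large $H$-degree $e$ satisfying the surjectivity $H^0(X,E_i) \to H^0(C,E_i\vert_C)$ for each $i$ and the numerical bound
\[
C \cdot \Big(-K_X - \sum_{i=1}^c D_i\Big) \geq \dim X - c + 1.
\]
A concrete construction is: take general polynomial maps $v_1,\ldots,v_{k_r} \colon \PP^1 \to V$ into the $n$-dimensional vector space $V$, of sufficiently large and distinct degrees, and define $C$ as the image of $t \mapsto (\langle v_1(t),\ldots,v_{k_i}(t)\rangle)_{1 \leq i \leq r} \in X$. For generic choices of large enough degree, $C$ is a smooth very free rational curve, linearly normal with respect to $H$ (and hence a rational normal curve of degree $e = \sum_j C \cdot H_j$ in $\PP^N$), and the restriction maps $H^0(X, E_i) \to H^0(C, E_i\vert_C)$ are surjective by dimension and genericity considerations. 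Since $-K_X - \sum D_i$ is ample and $C \cdot L$ grows linearly in the degrees of the $v_\ell$ for any fixed divisor class $L$, the displayed inequality holds once the degrees of the $v_\ell$ are large enough. Applying Theorem \ref{mainApplied} then yields a very free rational curve on the general complete intersection $Y$, proving $Y$ is SRC. The principal obstacle is the simultaneous verification of very-freeness, linear normality in $\PP^N$, and surjectivity of the restriction maps $H^0(X,E_i) \to H^0(C, E_i\vert_C)$ for a single $C$; all follow for generic $C$ of large enough multi-degree via standard Hom-scheme and cohomological arguments on flag varieties, but choreographing them cleanly is the heart of the argument.
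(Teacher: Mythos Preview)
Your overall strategy matches the paper's: reduce to Theorem \ref{mainApplied} with $D_i=3H+E_i$, and supply a very free rational normal curve $C\subset X$ of large enough $H$-degree. Your verification of the structural hypotheses (Cohen--Macaulay, linearly normal, ideal generated in degree $2$, $E_i$ globally generated because on a flag variety effective $=$ nef $=$ globally generated) is fine. The gap is precisely the part you yourself flag as ``the heart of the argument'': the existence of $C$.

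Two issues with your proposed construction. First, there is a tension between ``sufficiently large degrees'' and the rational normal curve condition. Linear normality of $C$ in $\PP^N$ means $H^0(X,H)\twoheadrightarrow H^0(C,H\vert_C)$, which forces $e=C\cdot H\le N=h^0(X,H)-1$; so you cannot simply make the $\deg v_\ell$ large. You need a specific choice of multidegree with $e\le N$ for which the curve really is a rational normal curve, and simultaneously $C\cdot(-K_X-\sum D_i)\ge m-c+1$. Second, even once $e\le N$, the assertion that a general curve of that class is a rational normal curve is not automatic: surjectivity of $H^0(X,H)\to H^0(C,H\vert_C)$ is an open condition, but you still need one curve in the given class for which it holds. ``Dimension and genericity considerations'' do not produce that witness.

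This is exactly what the paper supplies. It writes down an explicit map $i:\PP^1\to X$, built by first sending $(s,t)$ to the span of $v_{k_r,j}(s,t)=\sum_{\ell=0}^{n-k_r}s^{n-k_r-\ell}t^\ell e_{\ell+j}$ in $\Gr(k_r,n)$ and then inductively producing the smaller pieces of the flag; it checks by hand that enough Pl\"ucker monomials appear so that $C$ is a rational normal curve of degree $e=\sum_j k_j(n-k_j)$ under $\lvert H\rvert$, and that $TX\vert_C$ is ample because the universal subbundle restricts anti-amply. Finally it uses that $-K_X-\sum D_i-H$ is nef (minimality of $H$ among ample classes) to get $C\cdot(-K_X-\sum D_i)\ge C\cdot H=\sum_j k_j(n-k_j)\ge m-c+1$. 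In short: the paper replaces your genericity appeal with an explicit curve of a carefully chosen fixed degree, and that explicit verification is the substance of the proof that your sketch omits.
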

\begin{proof}
The complete linear series $\lvert H\rvert$ gives an embedding of $X$ into projective space $\PP^N$, whose image is cut out by quadrics in $\PP^N$ by \cite[Theorem 3.11]{Ram87}.
Hence, to apply Theorem \ref{mainApplied}, we must find a very free rational normal curve $C$ on $X$ of large $H$-degree such that $H^0(X,H)\ra H^0(C, H\vert_C)$ is surjective.

Fix a basis $e_1,\ldots,e_n$ of the $n$ dimensional vector space $V$ in which $X$ parameterizes flags.
We start by constructing a rational curve on the Grassmannian $Gr(k_r,n)$ with nice properties.
Let $i:\PP^1\ra Gr(k_r,n)$ send a point $(s,t)$ to the span $V_{k_r}(s,t)$ of the $k_r$ vectors
\[
v_{k_r,j}(s,t)=\sum_{i=0}^{n-k_r} s^{n-k_r-i}t^i e_{i+j}
\]
with $1\leq j\leq k_r$. We note two properties of the image curve $C=i(\PP^1)$:
\begin{itemize}
    \item The restriction of the universal subbundle of $Gr(k_r,n)$ to $C$ is anti-ample; indeed, it is isomorphic to $\OO(k_r-n)^{\oplus k_r}$.
    \item If $Gr(k_r, n)$ is embedded in projective space by the Pl\"ucker embedding, then the image of $C$ is a rational normal curve. For if $0\leq a\leq k_r(n-k_r)$ is an integer satisfying $a=b(n-k_r)+c$ with $b,c$ integers and $0\leq c< n-k_r$, the monomial $s^a t^{k_r(n-k_r)}$ is expressible as the restriction of the Pl\"ucker coordinate $X_I$ to $C$, where $I$ is the set of $k_r$ coordinates
    \[
    I=\{1,2,\ldots, b, n-k_r+b+1-c,n-k_r+b+2,n-k_r+b+1,\ldots, n-1, n\}. 
    \]
\end{itemize}
We now extend this map to a map $i:\PP^1\ra X$ that will retain these two properties. We define the maps $i:\PP^1\ra Gr(k_i,V)$ inductively, downward from $r$.
First define the map $i:\PP^1\ra \Gr(k_r,n)$ as above; the above construction also gives a basis $v_{k_r,1}(s,t),\ldots,v_{k_r,k_r}(s,t)$ for each vector space $V_{k_r}(s,t)$.
Now suppose that we have a map $i:\PP^1\ra \Gr(k_{i+1},n)$ and a basis $v_{k_i{+1}, 1}(s,t),\ldots, v_{k_{i+1}, k_{i+1}}(s,t)$ for the spaces $V_{k_{i+1}}(s,t)$. For each $(s,t)\in \PP^1$, define $V_{k_i}(s,t)$ as the span of the vectors $\{v_{k_i,j}\vert 1\leq j\leq k_i\}$, with
\[
v_{k_i, j}(s,t)=s^{k_{i+1}-k_i} v_{k_{i+1},j}(s,t,)+t^{k_{i+1}-k_i}v_{k_{i+1},j+k_{i+1}-k_i}.
\]
This gives a map $i:\PP^1\ra \Gr(k_i,n)$. Repeating this process, we get a map $i:\PP^1\ra \Gr(k_i,V)$ for every $i$.

Set $\delta_i=\min\{ k_{j+1}-k_j\vert i\leq j\leq r-1\}$.
If we write $v_{k_i,j}$ in the form
\[
v_{k_i,j}(s,t)=\sum_{\ell=0}^{k_i} c_\ell s^{k_i-\ell}t^\ell e_{j+\ell},
\]
then, inductively, we have 
\[
c_0=\ldots c_{k_i}=1.
\]
As a result, the Pl\"ucker coordinates on $V_{k_i}$, restricted to $C$, include the monomials
\[
s^{k_i(n-k_i)}, s^{(k_i-1)(n-k_i)}t^{n-k_i},\ldots, t^{k_i(n-k_i)}.
\]
Since these include both the top power of $s$ and of $t$, $V_{k_i}(s,t)$ is everywhere $k_i$-dimensional. By construction, the vector spaces $V_{k_i}(s,t)$ fit into a chain of subspaces
\[
\{0\}\subset V_{k_1}(s,t)\subset \cdots \subset V_{k,r}(s,t)\subset V.
\]
Hence,  we have a map $i:\PP^1\ra X$. The image $C$ of $i$ has degree $d=\sum_{1\leq i \leq r} k_i(n-k_i)$ with respect to $H$. Moreover, any monomial $s^a t^{d-a}$ can be realized as a product of the Pl\"ucker coordinates on each Grassmannian factor.
Hence, the image of $C$ under the Pl\"ucker embedding of $X$ is a rational normal curve. Finally, $TX\vert_C$ is a quotient of $\shom(S_{k_r}, Q_{k_1})$, where $S_{k_r}$ is the universal sub-bundle on $Gr(k_r,n)$ and $Q_{k_1}$ the universal quotient bundle on $Gr(k_1,n)$. Since $S_{k_r}$ is anti-ample, $TX\vert_C$ is ample.
If $Y$ is a codimension $c$ Fano complete intersection satisfying the hypotheses of the lemma containing $C\subset X$, we have that $-K_Y$ is the restriction of an ample divisor on $X$. In particular, the difference $-K_Y-H$ is nef. As a result, since $C\cdot H=\sum_{1\leq i \leq r}k_i(n-k_i)$, we have
\[
-K_Y\cdot C\geq \sum_{i=1}^r k_i(n-k_i).
\]
Then, by Theorem \ref{mainApplied}, the general complete intersection in the class of $Y$ is SRC and in particular contains a very free curve that is a deformation of $C$.
\end{proof}
\begin{remark}
Even in situations where $Y$ is not Fano, the proof of Lemma \ref{fl}   produces rational curves with calculable normal bundles on $Y$.
For instance, if $(-K_Y-D_1-\cdots-D_c)\cdot C\geq 0$, where $C$ is the rational curve constructed in the proof, then $C$ will have a globally generated vector bundle in the general complete intersection containing it, so we can conclude that the general complete intersection $Y$ is separably uniruled.
\end{remark}
\begin{lemma}\label{pr}
Let $X$ be a product of projective spaces. For each $1\leq j\leq c$, let $D_i$ be a a divisor class of degree at least 3 on each factor space. Let $Y$ be the general complete intersection of $c$ hypersurfaces of classes $D_1,\ldots,D_C$. If $-K_X-D_1-\cdots-D_c$ is ample, then $Y$ is SRC.
\end{lemma}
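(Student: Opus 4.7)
My plan is to apply Theorem \ref{mainApplied} to $X = \PP^{n_1} \times \cdots \times \PP^{n_r}$ embedded in $\PP^N$ via the Segre embedding associated to the polarization $H = H_1 + \cdots + H_r$ (with $H_j$ the pullback of the hyperplane class from the $j$th factor). The Segre variety is cut out by quadrics, so its ideal is generated in degree $k = 2$, and the condition $d_i \geq \max(k,3) = 3$ will match the numerical hypothesis of the lemma. The essential task is to exhibit a very free rational normal curve $C \subset X$ of sufficient anticanonical degree and check that the divisors $D_i$ admit a decomposition $D_i = d_i H + E_i$ meeting the hypotheses of Theorem \ref{mainApplied}.

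For the test curve, I would take $f : \PP^1 \to X$ to be the product of the standard rational normal curves $f_j : \PP^1 \to \PP^{n_j}$ of degree $n_j$. Its image $C$ satisfies $H_j \cdot C = n_j$ and hence $H \cdot C = m := \sum_j n_j = \dim X$. Under the Segre embedding, the coordinates of the image consist of all monomials $\prod_j s^{n_j - i_j} t^{i_j} = s^{m-|i|}t^{|i|}$ for $0 \leq i_j \leq n_j$, so every degree-$m$ monomial in $s,t$ appears; thus $C$ is an honest rational normal curve of degree $m$ in $\PP^N$. Because $f^*TX = \bigoplus_j f_j^* T\PP^{n_j} \cong \bigoplus_j \OO(n_j+1)^{\oplus n_j}$ is ample, $N_{C|X}$ is a quotient of an ample bundle and so $C$ is very free in $X$.

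Next, writing $D_i = \sum_j a_{i,j} H_j$ with each $a_{i,j} \geq 3$, I would set $d_i = \min_j a_{i,j} \geq 3$ and $E_i = \sum_j (a_{i,j} - d_i)H_j$, so $E_i$ is effective. Then $(d_i - 3)H + E_i = \sum_j (a_{i,j}-3)H_j$ is a nonnegative sum of base-point free classes and is therefore base-point free. The surjectivity of $H^0(X,\OO(E_i)) \to H^0(C,\OO(E_i)|_C)$ follows from the Künneth decomposition $\OO(E_i) = \boxtimes_j \OO_{\PP^{n_j}}(a_{i,j} - d_i)$: restriction to each $f_j(\PP^1) \subset \PP^{n_j}$ is surjective by projective normality of the rational normal curve, and the multiplication map on $\PP^1$ from the tensor product of the factor sections to $H^0(C,\OO(E_i)|_C)$ is surjective because multiplication of global sections of nonnegative line bundles on $\PP^1$ is.

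It remains to verify the numerical condition $C \cdot (-K_X - \sum_i D_i) \geq m - c + 1$. Ampleness of $-K_X - \sum_i D_i = \sum_j\bigl(n_j + 1 - \sum_i a_{i,j}\bigr)H_j$ forces each coefficient to be at least $1$, hence
\[
C \cdot \Bigl(-K_X - \sum_{i=1}^c D_i\Bigr) = \sum_j \Bigl(n_j + 1 - \sum_i a_{i,j}\Bigr)n_j \geq \sum_j n_j = m \geq m - c + 1
\]
whenever $c \geq 1$; the case $c = 0$ reduces to $Y = X$, which is rational and hence SRC. Theorem \ref{mainApplied} then guarantees that the general $Y$ carries a very free rational curve and is SRC. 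I do not anticipate any serious obstacle: the only nontrivial points are the linear normality check for $C$ in the Segre $\PP^N$ and the Künneth/projective-normality argument for the $E_i$-restriction, both of which are standard facts about rational normal curves.
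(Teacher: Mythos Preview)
Your proposal is correct and follows essentially the same strategy as the paper: take the product of rational normal curves of degree $n_j$ on each factor, observe it is a very free rational normal curve of degree $m=\dim X$ under the Segre embedding, and apply Theorem~\ref{mainApplied} together with the minimality of $H$ to obtain the numerical bound $C\cdot(-K_X-\sum D_i)\geq m\geq m-c+1$. Your write-up is in fact more detailed than the paper's, which leaves implicit the quadric generation of the Segre variety, the explicit decomposition $D_i=d_iH+E_i$, and the K\"unneth/projective-normality check for the surjectivity of $H^0(X,\OO(E_i))\to H^0(C,\OO(E_i)\vert_C)$.
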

\begin{proof}
Let $X=\PP^{a_1}\times \cdots \times \PP^{a_n}$, and let $C$ be a rational curve embedded in $X$ as a rational normal curve of degree $a_j$ in each factor $\PP^{a_j}$. Let $H_j$ be the pullback to $X$ of the hyperplane class on $\PP^{a_j}$, and set $H=H_1+\cdots+H_j$; $H$ is the minimal ample class on $X$. The curve $C$ is very free in $X$, and is linearly normal under the embedding of $X$ in projective space by $H$. If $c=0$, then $Y=X$ and $C$ being very free shows $X$ is SRC.

Now suppose that $Y$ is a Fano complete intersection containing $C$ satisfying the conditions of the lemma and of codimension $c\geq 1$. We have that $-K_Y\vert_C$ has degree at least $H\cdot C$, since $H$ is the minimal ample class on $X$. But $H\cdot C=a_1+\cdots+a_n=\dim X$, so the inequality
\[
-K_Y\cdot C \geq \dim (X)-c+1
\]
holds. Then by Theorem \ref{mainApplied}, $Y$ is SRC.
\end{proof}
Lemmas \ref{fl} and \ref{pr} collectively imply Theorem \ref{someMoreSRCVarieties}. Theorem \ref{grTheorem} is a special case of Lemma \ref{fl}.

Theorem \ref{mainApplied} certainly applies in some other instances; as the following results illustrate, in many cases the only real obstacle to applying it is finding a well behaved rational curve in the starting variety $X$. For instance, we have the following statement about Schubert varieties in homoegeneous varieties.
\begin{theorem}\label{schubertVarieties}
Let $V\subset \PP^n$ be a linearly normal homogeneous variety defined over an algebraically closed field $k$, let $X\subseteq V$ be a Schubert variety of dimension $m$, and suppose that $X$ has a very free rational curve $C$ contained in the smooth locus of $Y$ that is also a rational normal curve in $\PP^n$ of degree $e$. Let $\{d_i\}_{1\leq i\leq c}$ be a collection of integers, each at least 3, and suppose $-K_Y \cdot C\geq m+1-c+e(\sum_i d_i)$. Then a complete intersection of $Y$ with $c$ general hypersurfaces $D_i$ each of degree $d_i$ is SRC. 
\end{theorem}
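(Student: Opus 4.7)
The natural plan is to reduce Theorem \ref{schubertVarieties} directly to Theorem \ref{mainApplied}. I would take $D_i = d_i H$ and $E_i = 0$ for each $i$, where $H$ denotes the restriction to $X$ of the hyperplane class on $\PP^n$. With these choices one has $D_i \cdot C = d_i e$, so the numerical hypothesis
\[
C \cdot \left(-K_X - \sum_{i=1}^c D_i\right) \geq m - c + 1
\]
required by Theorem \ref{mainApplied} is literally the assumption $-K_X \cdot C \geq m + 1 - c + e\sum_i d_i$ that we are given. (I assume as is standard that the $Y$ appearing in the hypotheses of the statement is a typo for $X$, the ambient Schubert variety.)

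The bulk of the work is checking the structural hypotheses of Theorem \ref{mainApplied}. The key input here is Ramanathan's theorem \cite[Theorem 3.11]{Ram87}, which asserts that any Schubert variety in a homogeneous projective variety is projectively normal and Cohen--Macaulay, and that its ideal in the linearly normal embedding is generated by quadrics. This lets us take $k = 2$, so that $\max(k, 3) = 3$ matches the hypothesis $d_i \geq 3$. The vanishing $E_i = 0$ makes the surjectivity $H^0(X, \OO(E_i)) \to H^0(C, \OO(E_i)|_C)$ automatic, while $(d_i-3)H + E_i = (d_i - 3)H$ is base point free as a nonnegative multiple of the very ample class $H|_X$. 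The remaining hypotheses, that $C \subset X^{\mathrm{sm}}$ is a very free rational normal curve of degree $e$ in $\PP^n$, are assumed directly.

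The one technical point not spelled out in the statement is the codimension bound $c \leq m - 2$ appearing in Theorem \ref{mainApplied}. I would treat this as implicit in the setup: one needs $\dim(Y) \geq 2$ for the general complete intersection to admit a very free rational curve, and the positivity bound on $-K_X \cdot C$ together with $d_i \geq 3$ prevents the intersection from collapsing to lower dimension in any nontrivial situation. Once these hypotheses are in place, Theorem \ref{mainApplied} yields SRC for the general complete intersection and we are done.

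The main obstacle, such as it is, is really just bookkeeping around Ramanathan's result: one must ensure that the quadric-generation statement holds uniformly for every Schubert variety inside any linearly normal homogeneous embedding, rather than just in the Grassmannian or type-$A$ flag case handled explicitly in the proofs of Lemmas \ref{fl} and \ref{pr}. No new geometric input beyond the machinery already developed is needed.
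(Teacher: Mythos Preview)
Your proposal is correct and follows essentially the same route as the paper: both reduce directly to Theorem~\ref{mainApplied} with $D_i=d_iH$ and $E_i=0$, invoking Ramanathan's theorem \cite[Theorem 3.11]{Ram87} to get that the Schubert variety $X$ is Cohen--Macaulay, projectively normal, and cut out by quadrics (so $k=2$), after which the numerical hypothesis of Theorem~\ref{mainApplied} is exactly the inequality assumed. Your write-up is in fact more careful than the paper's own proof in explicitly verifying the auxiliary hypotheses (base-point-freeness of $(d_i-3)H$, the trivial surjectivity for $E_i=0$, and the typo $Y\leadsto X$).
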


\begin{proof}
 By \cite[Theorem 3.11]{Ram87}, X is cut out in $\PP^n$ by linear and quadric hypersurfaces. And, given any very free rational curve $C$ on $X$ that is a rational normal curve in $\PP^n$, and $\{d_i\}_{1\leq i \leq c}$ satisfying the inequality of the hypothesis, we have 
\[
C \cdot (-K_X-\sum_{1\leq i \leq c} D_i)\geq m-c+1.
\]
Then by Theorem \ref{mainApplied}, if $D_1$, \ldots, $D_c$ are general hypersurfaces of degrees $d_1,\ldots,d_c$, and $Y=D_1\cap \cdots \cap D_c \cap X$ is smooth, we have that $Y$ is SRC.

It remains to show that a general complete intersection contains a very free curve that is a deformation of $C$. In deformation theoretic terms, we want the map
\[
H^0(N_{C\vert \PP^n})\ra H^0(\bigoplus_{1\leq i \leq c} \OO(d_i H)\vert_C)
\]
to be surjective. This surjectivity follows from the very freeness of $C$ in the complete intersection.

\end{proof}
Likewise, Theorem \ref{mainApplied} applies to some weighted projective spaces.

\begin{lemma}\label{wps}
Let $X$ be the well-formed weighted projective space $\PP(a_0,\ldots,a_m)$.
Set $a'=\mathrm{lcm}(a_1,\ldots,a_m)$, and let $a=a'r$ be an integer such that $\OO(a)$ is very ample on $X$ and $X$ is cut out in $\PP(H^0(\OO(a))^*)$ by quadrics
Suppose that for each $0\leq i\leq m$ there exists an integer $0\leq b_i\leq ma_i$ such that each integer $0\leq \ell \leq ma$ can be expressed as a sum $\ell=c_0b_0+\cdots +c_mb_m$ with each $c_i$ a nonnegative integer and $ma=c_0a_0+\cdots+c_ma_m$.
Let $Y$ be a smooth Fano complete intersection of general hypersurfaces each of (weighted) degree at least $3a$ in $X$. Then $Y$ is SRC.
\end{lemma}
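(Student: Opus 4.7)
The plan is to apply Theorem \ref{mainApplied} to $X=\PP(a_0,\ldots,a_m)$ in its very ample embedding by $\OO(a)$. Since the ideal of $X$ is generated in degree $2$ and each hypersurface has weighted degree at least $3a$, we may write each hypersurface class as $D_i=d_iH+E_i$ with $H=\OO(a)$ and $d_i\geq 3=\max(k,3)$; the surjectivity condition on $E_i$ can be checked separately using the rational-normal-curve structure of the curve constructed below. What remains is to exhibit a very free rational normal curve $C$ in the smooth locus of $X$ with enough intersection against $-K_X-\sum D_i$.

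I would construct $C$ as the image of the morphism $\phi\colon\PP^1\to X$ given by
\[
\phi(u,v)=\bigl(u^{b_0}v^{ma_0-b_0},\ u^{b_1}v^{ma_1-b_1},\ \ldots,\ u^{b_m}v^{ma_m-b_m}\bigr),
\]
using the integers $b_i$ from the hypothesis. The bounds $0\leq b_i\leq ma_i$ make each coordinate a legitimate polynomial, and the uniform degree $ma_i$ in the $i$-th slot makes $\phi$ compatible with the weighted action on $X$ (scaling $(u,v)$ by $\lambda$ corresponds to the weighted action on $X$ by $\lambda^m$). Thus $\phi^*\OO(a)\cong\OO(ma)$, and $C=\phi(\PP^1)$ has degree $ma$ in $\PP(H^0(\OO(a))^*)$. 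The combinatorial hypothesis is exactly what forces $C$ to be a rational normal curve of degree $ma$ in its linear span: pulling back the monomial section $\prod x_i^{c_i}\in H^0(X,\OO(ma))$ (with $\sum c_ia_i=ma$) yields $u^{\sum c_ib_i}v^{m^2a-\sum c_ib_i}$, and the hypothesis ensures the $u$-exponents $\sum c_ib_i$ cover all of $[0,ma]$; coupled with quadric generation of the ideal of $X$ (which ties the $\OO(a)$- and $\OO(ma)$-embeddings together in a controlled way), this shows $C$ spans a $\PP^{ma}$ in the ambient projective space and is a rational normal curve there. A mild adjustment of the $b_i$ (preserving the hypothesis) handles avoidance of the singular locus, which in weighted projective space is a union of linear strata defined by the vanishing of subsets of the coordinates.

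For very-freeness, use the Euler sequence $0\to\OO_X\to\bigoplus_i\OO(a_i)\to TX\to 0$ on the smooth locus: pulling back, $TX|_C$ is a rank-$m$ quotient of $\bigoplus_i\OO(ma_i)$, and since the defining section is nowhere-vanishing (one can arrange some $b_i=0$ and some $b_i=ma_i$, so the section does not vanish at either point of $\PP^1$), a splitting-type computation shows each summand of $TX|_C$ has positive degree. Since $C$ has $\OO(a)$-degree $ma$, we compute
\[
C\cdot(-K_X-D_1-\cdots-D_c)=m\Bigl(\textstyle\sum_i a_i-\sum_j \deg_{\mathrm{wt}} D_j\Bigr)\geq m\geq m-c+1
\]
for $c\geq 1$ by the Fano assumption that $-K_X-\sum D_j$ is ample; the case $c=0$ is trivial since then $Y=X$ already contains the very free curve $C$. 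Theorem \ref{mainApplied} then produces a very free rational curve on $Y$ and shows $Y$ is SRC. The main technical hurdles are (i) verifying linear normality of $C$ in the $\OO(a)$-embedding from the $\OO(ma)$-phrased combinatorial hypothesis, which requires carefully exploiting the Veronese-type compatibility afforded by quadric generation of the ideal, and (ii) ensuring $C$ lies in the smooth locus of $X$; both are controlled by the particular structure of the $b_i$ allowed by the hypothesis.
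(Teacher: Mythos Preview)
Your overall strategy coincides with the paper's: build a monomial curve using the $b_i$, check it is a rational normal curve under $\OO(a)$ via the combinatorial hypothesis, verify very-freeness from the Euler sequence, compute $-K_Y\cdot C\geq m$, and invoke Theorem~\ref{mainApplied}. The one substantive difference is how you place $C$ in the smooth locus, and that step has a real gap.

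You propose to keep the monomial map $\phi(u,v)=(u^{b_i}v^{ma_i-b_i})_i$ and ``mildly adjust the $b_i$'' to avoid the singular strata. But the $b_i$ are fixed by hypothesis, and in any case a monomial curve always sends $[1:0]$ and $[0:1]$ to coordinate points of $X$; those points are singular whenever the surviving weights share a common factor. In the paper's own example $\PP(1,\ldots,1,a_m)$ with $b_m=ma_m$, the image of $[1:0]$ is the vertex $(0,\ldots,0,1)$, which is singular for $a_m>1$. So no adjustment of the $b_i$ saves you in general. The paper's fix is different and clean: after using the monomial curve to verify that $H^0(X,\OO(a))\to H^0(\PP^1,\OO(ma))$ is onto, it \emph{deforms} to a general map $i(s,t)=(f_0,\ldots,f_m)$ with $\deg f_i=ma_i$. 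Upper semicontinuity preserves the rational-normal-curve property, and for general $f_i$ no two vanish simultaneously, so by well-formedness the image lies entirely in the smooth locus.

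Two smaller points. First, your worry (i) about passing from an ``$\OO(ma)$-phrased'' hypothesis to linear normality in the $\OO(a)$-embedding is unnecessary: the constraint $\sum c_ia_i$ equal to the weighted degree picks out monomials in $H^0(X,\OO(a))$, whose pullbacks are exactly the $u^{\ell}v^{ma-\ell}$; no Veronese/quadric argument is needed. Second, for very-freeness you do not need to analyze the Euler section or arrange $b_i\in\{0,ma_i\}$: on $\PP^1$ any quotient of an ample bundle is ample, so $TX|_C$, being a quotient of $\bigoplus_i\OO(ma_i)$, is ample immediately.
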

\begin{proof}
The result is trivial if $X$ is a weighted projective surface, since $X$ is SRC as a rational variety and any smooth Fano curve on it is isomorphic to $\PP^1$. 
Therefore, we may assume $m\geq 3$.
Let $b_i$ satisfy the hypotheses of the lemma.

Let $i':\PP^1\ra X$ be a map given by via the formula
$$i'(s,t)= (s^{b_0}t^{ma_0-b_0},\ldots, s^{b_m}t^{ma_m-b_m}).$$
 Then, by the hypothesis on the $b_i$, the weighted degree $a$ polynomials on $X$ include monomials that restrict to every monomial $s^{ma},\ldots, t^{ma}$ on $\PP^1$; in particular, the composition map $i':\PP^1\ra \PP(H^0(X,\OO(a)))$ embeds $\PP^1$ as a rational normal curve.
 By upper semicontinuity of cohomology, if $i:\PP^1\ra X$ is given by a general $m+1$-tuple of polynomials,
 \[
 i(s,t)=(f_0(s,t),\ldots, f_m(s,t))
 \]
 with each $f_i$ a polynomial of degree $a_im$, then $i(\PP^1)$ will also produce a rational normal curve in $\PP(H^0(X,\OO(a)))$. 
 Since $i$ is general, no two functions $f_i$ vanish simultaneously, so by the well-formedness of $X$, $i$ maps $\PP^1$ into the smooth locus of $X$. So the curve $C=i(\PP^1)$ is a very free curve, because the restricted tangent bundle $TX\vert_C$ is a quotient bundle of the ample bundle $\bigoplus_{0\leq i \leq m} \OO(ma_i)$ on $C$.

 Regarding $X$ as a projective subvariety of $\PP^N=\PP(H^0(X,\OO(a)))$, let $Y_1,\ldots, Y_c$ be general hypersurfaces containing $C$ with each $D_i$ of degree $d_i\geq 3$, and let $Y=Y_1\cap\cdots \cap Y_c$.
 If $Y$ is smooth and Fano, we have $a_0+\cdots+a_m-a(d_1+\cdots +d_c)>0$, whence 
 \[
 -K_Y \cdot C=m(a_0+\cdots+a_m-a(d_1+\cdots +d_c))\geq m
 \]
By Theorem \ref{mainApplied}, if $Y_i$ are general hypersurfaces of degree $d_i$ in $\PP(H^0(X,\OO(a)))$, then their complete intersection
is SRC provided it is smooth and Fano.
\end{proof}
\begin{example}
This lemma applies to the weighted projective space $\PP(1,\ldots,1,a_m)$ if $m\geq 2$. If $a=a_m$, $\OO(a)$ is very ample and its image is cut out by quadrics. And $b_{m-1}=m$, $b_m=ma_m$, and $b_i=i$ otherwise verifies the additional combinatorial hypothesis of the result. 
\end{example}
\section{Products}\label{products}
In this section, we discuss the normal bundles of rational curves in products of varieties.

Products of SRC varieties are SRC as the following argument shows. Let $X_1$ and $X_2$ be SRC varieties and let $C_i$ be a very free curve in $X_i$.  Let $C$ be an immersed $(1,1)$ curve in $C_1\times C_2$. Then we have the exact sequence
\[
0\ra N_{C\vert C_1\times C_2}\ra N_{C\vert X_1\times X_2}\ra N_{C_1\times C_2\vert X_1\times X_2}\vert_C\ra 0.
\]
Since the first and last bundles in this sequence are ample, the one in the middle is as well. 
In general, the normal bundles of the two projections of $C$ in $X_1$ and $X_2$ do not determine the normal bundle of $C$ in $X_1 \times X_2$. However, if $C$ is general, Theorem \ref{productsMain} asserts that $N_{C\vert X_1\times X_2}$ is a general quotient of the restricted tangent bundle $T(X_1\times X_2)\vert_C$ by $TC$.

We will now prove Theorem \ref{productsMain}. Let $f:\PP^1\ra X_1\times X_2$ be an immersion. For any integer $d$, we have an exact sequence
\[
0\ra N^*_f(d)\ra f^*(T^*X_1\oplus T^*X_2)(d)\ra \OO(d-2)\ra 0.
\]
Let $V_{i,d}\subset H^0(\PP^1,\OO(d-2))$ be the image of $H^0(\PP^1,f^*(T^*X_i)(d))$ in the associated long exact sequence. Let $v_{i,d}$ denote the dimension of $V_{i,d}$. If $V_{1,d}$ and $V_{2,d}$ are transverse, then the image of $H^0(\PP^1, f^*(T^*X_1)(d)\oplus f^*(T^*X_2)(d))$ in $H^0(\PP^1, \OO(d-2))$ has dimension $\min(d-1, v_{1,d}+v_{2,d})$.
In general, $V_{1,d}$ and $V_{2,d}$ do not have to be transverse. For example, if $X_1=X_2$ and $f_1=f_2$, then $V_{1,d}=V_{2,d}$.
However, if $f$ is general and $k$ is a field of sufficiently large characteristic, the following proposition guarantees that $V_{1,d}$ and $V_{2,d}$ are transverse.
Set
\[
d_0=\min\{d\geq 2\,\vert\, V_{1,d}=H^0(\PP^1,\OO(d-2)) \;\mathrm{or}\; V_{2,d}=H^0(\PP^1,\OO(d-2)) \}.
\]
Serre vanishing guarantees the existence of this $d_0$.
\begin{proposition}\label{productsProp}
Assume the characteristic of the base field is 0 or $p\geq d_0-1$. Let the maps $f_1:\PP^1\ra X_1$ and $f_2:\PP^1\ra X_2$ be immersions, and let $\alpha:\PP^1\ra \PP^1$ be a general automorphism. Let $f:\PP^1\ra X_1\times X_2$ be given by $(f_1,f_2\circ \alpha)$.
 Then, for any $d$, $V_{1,d}$ and $V_{2,d}$ are transverse.
\end{proposition}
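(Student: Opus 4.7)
The plan is to recognize the replacement of $f_2$ by $f_2 \circ \alpha$ as the natural $\mathrm{PGL}_2 = \mathrm{Aut}(\PP^1)$ action on $H^0(\PP^1, \OO(d-2))$, and then to derive transversality from irreducibility of this representation in the stated characteristic. First, applying the chain rule $d(f_2 \circ \alpha) = \alpha^*(df_2) \circ d\alpha$ and dualizing, the cotangent map entering the definition of $V_{2,d}$ for $f = (f_1, f_2 \circ \alpha)$ factors through the pullback action of $\alpha$ on $H^0(\PP^1, \OO(d-2))$; unwinding the identification $\alpha^* \OO(d-2) \cong \OO(d-2)$, the resulting $V_{2,d}$ equals the image of the original $V_{2,d}$ (computed for $f_2$) under this $\mathrm{PGL}_2$-action, while $V_{1,d}$ is unaltered. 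The proposition therefore reduces to showing that for generic $\alpha \in \mathrm{PGL}_2$, $V_{1,d}$ and $\alpha^* V_{2,d}$ are transverse in $H^0(\PP^1, \OO(d-2))$.

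Transversality is an open condition on $\alpha$: the dimension $\dim(V_{1,d} + \alpha^* V_{2,d})$ is lower semi-continuous, so the locus where transversality holds for a given $d$ is open in $\mathrm{PGL}_2$, and it suffices to produce a single $\alpha$ achieving it. When $d \geq d_0$, one of $V_{1,d}, V_{2,d}$ fills $H^0(\PP^1, \OO(d-2))$ by definition, so transversality is automatic. For the remaining finitely many values $d < d_0$, the hypothesis $p \geq d_0 - 1$ gives $d - 2 < p$, which is precisely the range in which $H^0(\PP^1, \OO(d-2)) \cong \mathrm{Sym}^{d-2}(k^2)^*$ is an irreducible $\mathrm{SL}_2$-representation.

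In this irreducible regime I would argue transversality by contradiction via the incidence variety
\[
Z = \{(\alpha, [v]) \in \mathrm{PGL}_2 \times \PP V_{1,d} : \alpha^{-1} v \in V_{2,d}\}.
\]
If transversality fails for every $\alpha$, the first projection $Z \to \mathrm{PGL}_2$ is surjective, so $\dim Z \geq 3$. On the other hand, for each nonzero $v \in V_{1,d}$, irreducibility implies that the orbit $\mathrm{PGL}_2 \cdot v$ has $k$-linear span equal to all of $H^0(\PP^1, \OO(d-2))$, and so cannot lie inside the proper subspace $V_{2,d}$; hence each fiber of $Z \to \PP V_{1,d}$ is a proper closed subvariety of $\mathrm{PGL}_2$. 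A sufficiently sharp estimate on these fiber dimensions then yields $\dim Z < 3$, contradicting surjectivity. The dual form of this incidence handles the case $v_{1,d} + v_{2,d} > d - 1$ in the same way.

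The main obstacle is producing the sharp fiber dimension estimate: the naive codimension-one bound from irreducibility only gives a contradiction when $\dim V_{1,d} \leq 1$. For general $V_{1,d}$, one needs to show that the fiber $\{\alpha : \alpha^{-1} v \in V_{2,d}\}$ has codimension equal to $\codim(V_{2,d})$ rather than just $1$. I would attempt this either by establishing an $\mathrm{SL}_2$-analogue of Kleiman transversality --- that the orbit of a generic vector meets a given linear subspace in the expected dimension inside an irreducible representation --- or by constructing a specific $\alpha$ (for instance, a generic diagonal torus element acting with distinct weights on the monomial basis of $\mathrm{Sym}^{d-2}$) for which transversality can be verified by hand; once one such $\alpha$ is found, openness of the transverse locus completes the proof.
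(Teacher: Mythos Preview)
Your reduction is correct: after precomposing with $\alpha$, the space $V_{2,d}$ is replaced by its translate under the natural $\mathrm{SL}_2$-action on $H^0(\PP^1,\OO(d-2))$, transversality is open, and the case $d\geq d_0$ is trivial. You also correctly observe that for $d<d_0$ the hypothesis gives $d-2<p$, so $V=H^0(\PP^1,\OO(d-2))$ is an irreducible $\mathrm{SL}_2$-module. But from that point on the argument does not close, and you say so yourself: the incidence-variety count only yields the result when $\dim V_{1,d}\leq 1$, and neither of your suggested fixes works as stated. A generic torus element will not do, since $V_{1,d}$ and $V_{2,d}$ may both be monomial (torus-invariant) subspaces; and an ``$\mathrm{SL}_2$-Kleiman'' statement of the required strength is exactly what is in question, since $\mathrm{SL}_2$ does not act transitively on $\mathrm{Gr}(k,V)$. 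Irreducibility of $V$ alone only says that the linear span of an $\mathrm{SL}_2$-orbit is all of $V$; it gives no control on how orbits of $k$-planes sit inside $\mathrm{Gr}(k,V)$.

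The missing idea, and the one the paper uses, is to pass to Pl\"ucker coordinates and invoke the Wronskian. If $g_1,\ldots,g_k$ is a basis of $V_2$ and $v_2=g_1\wedge\cdots\wedge g_k\in\Lambda^kV$, then the Wronskian determinant is an $\mathrm{SL}_2$-equivariant projection $\Lambda^kV\to V_b$ onto the highest-weight summand, and (by \cite[Lemma~2.3]{Ya96}, using precisely the bound $p\geq d_0-1$) it does not annihilate $v_2$. Hence the $\mathrm{SL}_2$-subrepresentation generated by $v_2$ contains $V_b$, in particular the explicit decomposable vector $v_2'=s^{d-2}\wedge\cdots\wedge s^{d-1-k}t^{k-1}$. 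Doing the same at the opposite end for $V_1$ produces $v_1'=t^{d-2}\wedge\cdots\wedge s^{\ell-1}t^{d-1-\ell}$, and since $v_1'\wedge v_2'\neq 0$ the bilinear map $(u,w)\mapsto u\wedge w$ is not identically zero on the subrepresentations generated by $v_1,v_2$; this yields some $\alpha$ with $v_1\wedge\alpha v_2\neq 0$, i.e.\ $V_1\cap\alpha V_2=0$. The complementary case $v_{1,d}+v_{2,d}\geq d-1$ reduces to this one by passing to a subspace. So the crux you are missing is not a sharper dimension count but the Wronskian, which shows that every $k$-plane degenerates, under the $\mathrm{SL}_2$-action, to the osculating $k$-plane of the rational normal curve.
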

\begin{proof}
Fix an integer $d$. If $v_{1,d}=d-1$ or $v_{2,d}=d-1$, the result is trivial, so we assume both $v_{1,d}<d-1$ and $v_{2,d}<d-1$. The maps $f_1$ and $f_2$ induce maps $$Df_i:H^0(\PP^1, f_i^*T^*X_i(d))\ra H^0(\PP^1, T^*\PP^1(d)).$$ Set $V=H^0(\PP^1, T^*\PP^1(d))$ and let $V_1$ and $V_2$ be the images of $f_1$ and $f_2$ respectively. There is a natural action of $SL_2$ on $H^0(\PP^1,T^*\PP^1(d))$ such that $\alpha\in SL_2$ acts by composition  $\alpha \eta=\eta \circ \alpha$. Then the image of $H^0(\PP^1, (f^*T^*(X_1\times X_2))(d))$ in $V$ is the span of $V_1$ and $\alpha V_2$.  We want to show that for general $\alpha$  these two vector spaces are transverse. 

Suppose first that $v_{1,d}+v_{2,d}\leq d-1$. Using an isomorphism $T^*\PP^1\cong \OO(-2)$, we fix an identification $H^0(\PP^1,T^*\PP^1(d))$ with $H^0(\PP^1,\OO(d-2))$ as an $SL_2$ representation for the remainder of the proof. Let $g_1,\ldots, g_{v_{2,d}}$ be a basis for $V_2$, so each $g_i$ is a degree $d-2$ homogeneous polynomial in coordinates $s,t$ on $\PP^1$. Set $v_2:=g_1\wedge \cdots \wedge g_{v_{2,d}}\in \Lambda^{v_{2,d}} V$. By \cite[Lemma 2.3]{Ya96} and the bound on $p$,
 we have that the Wronskian
 \[
 \begin{vmatrix}
 g_1&\frac{\partial }{\partial t}g_1&\dots&\frac{\partial^{v_{2,d}-1}}{\partial t^{v_{2,d}-1}}g_1\\
 \vdots &\vdots&\ddots&\vdots\\
  g_{v_{2,d}}&\frac{\partial }{\partial t}g_{v_{2,d}}&\dots&\frac{\partial^{v_{2,d}-1}}{\partial t^{v_{2,d}-1}}g_{v_{2,d}}
 \end{vmatrix}
 \]
 does not vanish everywhere. Pick coordinates $s,t$ on $\PP^1$ such that the Wronskian does not vanish at $t=0$. 
Let $U$ be the smallest subrepresentation of $\Lambda^{v_2}(V)$ containing $v_2$.

For any $a$ let $V_a$ be the $SL_2$ representation $\sym^aV_1$, where $V_1$ is the natural 2-dimensional representation of $SL_2$.
The Wronskian map is a map of representations $\Lambda^{v_{2,d}}V\ra V_b$, where $b=v_{2,d}\frac{2d-3-v_{2,d}}{2}$. Then $V_b$ is the highest weight direct summand of $\Lambda^{v_{2,d}}V\ra V_b$, so since $U$ is not in the kernel of the map $\Lambda^{v_{2,d}}V\ra V_b$, $U$ must contain $V_b\subset \Lambda^{v_{2,d}}V$.
In particular, the element of $V_b\subset \Lambda^{v_{2,d}}V$,  $v_2'=s^{d-2}\wedge\cdots\wedge s^{d-1-v_{2,d}}t^{v_{2,d}-1}$ is in the span of $v_2$ under the $SL_2$ action.
 
 By a similar argument, if we set $v_1$ as the element of $\Lambda^{v_{1,d}}V$ corresponding to $V_1$, we have that $v_1':=t^{d-2}\wedge \cdots \wedge s^{v_{1,d}-1}t^{d-1-v_{1,d}}$ is in the $SL_2$ span of $v_1$. As a consequence, because $v_1'\wedge v_2'$ is nonzero, for some $\alpha\in SL_2$ we have $v_1\wedge \alpha v_2\neq 0$. Then $V_1$ and $\alpha V_2$ have zero intersection.
 
 If $v_{1,d}+v_{2,d}\geq d-1$, let $V'_1\subset V_1$ be a subspace of complementary dimension to $V_2$. Then the argument above establishes that for some $\alpha$, $V'_1$ and $\alpha V_2$ span $V$, giving the desired result.
 \end{proof}

\begin{proof}[Proof of Theorem \ref{productsMain}]
Let the maps $f_i:\PP^1\ra X$ be as in the statement of the result.
Suppose $\mathrm{char}(k)=0$ or $\mathrm{char}(k)=p$ and $H^0(\PP^1,f_1^*(T^*X_1)(p+2))\ra H^0(\PP^1,T^*\PP^1(p+2))$ is surjective; by hypothesis, this can always be accomplished by permuting the indices.
In what follows, we assume the characteristic is positive, because the characteristic zero case merely requires removing all reference to the characteristic.
Since the product maps
\[
H^0(\PP^1,T^*\PP^1(p+2))\otimes H^0(\PP^1,\OO(a))\ra H^0(\PP^1, T^*\PP^1(p+a+2)
\]
are surjective for all $a\geq 0$, the above hypothesis implies that for all $d\geq p+2$ the maps $H^0(\PP^1, f_1^*(T^*X_1)(d)\ra H^0(\PP^1,T^*\PP^1(d))$ are surjective. 

Set $Y_i = X_1 \times \cdots \times X_i$. Define $g_i:\PP^1\ra Y_i$ inductively by $g_1=f_1$  and $g_i=(g_{i-1}, f_i\circ \alpha_i )$, where $\alpha_i$ is a general automorphism of $\PP^1$.
Since $\alpha_i^*(E)\cong E$ for any vector bundle $E$ on $\PP^1$, we have that $g_i^*(TY_i)$ is isomorphic to $(f_1,\ldots,f_i)^*(TY_i)$.
In addition, because the map 
\[
H^0(\PP^1,f_1^*(T^*X_1)(d))\ra H^0(\PP^1,T^*\PP^1(d))
\]
is surjective if $d\geq p+2$, the maps
\[
H^0(\PP^1,g_i^*(T^*Y_i)(d))\ra H^0(\PP^1,T^*\PP^1(d))
\]
are surjective as well, since $g_i^*(T^*Y_i)(d)$ has $f_1^*(T^*X_1)(d)$ as a quotient.

So if $d\geq p+2$, we have
\[
h^0(\PP^1,N^*_{g_i}(d))=h^0(\PP^1, g_i^*(T^*Y_i)(d))-d+1
\]
automatically, verifying the result.
Finally, if $d\leq p+1$, applying Proposition \ref{productsProp} to the pair of morphisms $(g_{i-1},f_i)$, we have 
\[
h^0(\PP^1, N^*_{g_i}(d))=\max(h^0(\PP^1, g_i^*(T^*Y_i)(d))-d+1, h^0(\PP^1, N^*_{g_{i-1}}(d))+h^0(\PP^1, N^*_{f_i}(d)).
\]
Combining these formulas, noting $h^0(f_i^*(T^*X)(d))\geq h^0(N^*_{f_i}(d))$, and setting $g=g_r$, we have that $g^*(TX)\cong f^*(TX)$ and 
\[
h^0(\PP^1, N^*_g(d))=\max(h^0(\PP^1,f^*(T^*X)(d))-d+1, \sum_{i=1}^r h^0(\PP^1,N_{f_i}^*(d))).
\]
\end{proof}
 \begin{remark}
The restriction on the characteristic is needed in Proposition \ref{productsProp}---and hence in Theorem \ref{productsMain}. Let $\mathrm{char}(k)=p\geq 3$, and let $f:C\ra \PP^3$ be the rational curve embedded by the map $$(s,t)\mapsto (s^{p+1},s^{p}t, st^{p},t^{p+1}).$$ The restricted cotangent bundle $T^*\PP^3\vert_C$ is $\OO(-p-2)^{\oplus 2}\oplus \OO(-2p)$, and the induced map $T^*\PP^3\vert_C\ra T^*C$ is given by $(s^p,t^p,0)$. In particular, the image of the map on $H^0$ induced by this map does not change if $f$ is precomposed with an automorphism. The map $(f \circ \alpha, f):C\ra \PP^3\times \PP^3$ induced by twisting $f$ hence gives a map
 $H^0(\PP^1, (f^*T^*(\PP^3\times \PP^3))(d))\ra H^0(\PP^1, T^*\PP^1(d))$ with an image of dimension $\max(0, \min(2(d-p-1), d-1))$. Consequently, $N_{C\vert \PP^3\times \PP^3}$ has splitting type  $\OO(-2p)^{\oplus 2}\oplus \OO(-2p-2)\oplus \OO(-p-2)^{\oplus 2}$ instead of the general splitting type.

 \end{remark}
\appendix

\bibliographystyle{plain}

\begin{thebibliography}{ABCH}
\bibitem[AR17]{AlzatiRe}
A. Alzati and R. Re, Irreducible components of Hilbert Schemes of rational curves with given normal bundle, Algebr. Geom., {\bf 4} no. 1 (2017), 79--103.

\bibitem[Ben11]{Ben11}
O. Benoist, Le th{\'e}or{\`e}me de {B}ertini en famille, Bull. Soc. Math. France {\bf 139} no. 4 (2011), 555--569.

\bibitem[Br13]{Bridges}
T. Bridges, R. Datta, J. Eddy, M. Newman and J. Yu, Free and very free morphisms into a Fermat hypersurface, Involve {\bf 6} (2013), no. 4, 437--445.

\bibitem[CZ14]{CZ14}
Q. Chen and Y. Zhu, Very free curves on Fano complete intersections, Algebr. Geom., \textbf{1} no. 5 (2014) 558--572.

\bibitem[Con06]{Conduche}
 D. Conduch\'{e}, Courbes rationnelles et hypersurfaces de l'espace projectif, Ph.D. thesis, Universit\'{e}. Louis Pasteur, 2006.
 


\bibitem[CR18]{CoskunRiedl}
I. Coskun and E. Riedl, Normal bundles of rational curves in projective space, Math. Z. {\bf 288}  no. 3-4 (2018), 803--827. 

\bibitem[CR19]{CR19}
I. Coskun and E. Riedl, Normal bundles of rational curves on complete intersections, Commun. Contemp. Math. \textbf{21} no. 2 (2019), 29 pp.  


\bibitem[EV81]{EisenbudVandeven}
D. Eisenbud and A. Van de Ven, On the normal bundles of smooth rational space curves, Math. Ann., {\bf 256} (1981), 453--463.

\bibitem[EV82]{EisenbudVandeven2}
D. Eisenbud and A. Van de Ven, On the variety of smooth rational space curves with given degree and normal bundle, Invent. Math., {\bf 67} (1982), 89--100.


  

\bibitem[GS80]{GhioneSacchiero}
F. Ghione and G. Sacchiero, Normal bundles of rational curves in $\PP^3$, Manuscripta Math., {\bf 33} (1980), 111--128.







\bibitem[K96]{Kol96}
J. Koll\'{a}r, Rational curves on algebraic varieties, Springer, 1996.

  



\bibitem[LT19]{LT19}
B. Lehmann and S. Tanimoto, Rational curves on prime Fano threefolds of index 1, J.  Algebraic Geom., {\bf 30} (2021), 151--188.


\bibitem[Mat86]{Mat86}
H. Matsumura, Commutative ring theory, Cambridge University Press, 1986.

\bibitem[Rat16]{Rat16}
J. Rathmann, An infinitesimal approach to a conjecture of Eisenbud and Harris, preprint, arXiv:1604.06069.


\bibitem[Ram87]{Ram87}
A. Ramanathan, Equations defining Schubert varieties and Frobenius splittings of diagonals, Publ. Math. IH{\'E}S, {\bf 65} (1987), 61--90.
  

\bibitem[Ran07]{Ran}
Z. Ran, Normal bundles of rational curves in projective spaces, Asian J. Math. {\bf 11}  no. 4 (2007), 567--608.

\bibitem[Sa80]{Sacchiero2}
G. Sacchiero, Fibrati normali di curvi razionali dello spazio proiettivo, Ann. Univ. Ferrara Sez.
VII, {\bf 26} (1980), 33--40.

\bibitem[Sa82]{Sacchiero}
G. Sacchiero, On the varieties parameterizing rational space curves with fixed normal bundle, Manuscripta Math., {\bf 37} (1982), 217--228.

\bibitem[Sh12a]{She12}
M. Shen, Rational curves on Fermat hypersurfaces, C. R. Math. Acad. Sci. Paris {\bf 350} (2012), no.~15-16, 781--784.

\bibitem[Sh12b]{Shen2}
M. Shen, On the normal bundles of rational curves on Fano 3-folds, Asian J. Math. {\bf 16} no. 2 (2012), 237--270.

\bibitem[Ti15]{Tia15}
Z. Tian, Separable rational connectedness and stability, in Rational points, rational curves, and entire holomorphic curves on algebraic varieties, Contemp. Math., \textbf{654} (2015), 155--160.
\bibitem[Ya96]{Ya96}
J. T.-Y. Yang, The truncated second main theorem of function fields, J. Number Theory {\bf 58} (1996), 139--157.

\bibitem[Zh11]{Zhu}
Y. Zhu, Fano hypersurfaces in positive characteristic, preprint, arXiv:1111.2964.

\end{thebibliography}

\end{document}